\documentclass[smallextended,referee,envcountsect]{svjour3}
\smartqed
\journalname{}

\usepackage{amsmath,amssymb,mathtools,bm}
\usepackage{booktabs,array,float,graphicx}
\usepackage{xcolor}
\usepackage{cite}
\usepackage[hidelinks]{hyperref}

\allowdisplaybreaks[2]
\spnewtheorem{assumption}{Assumption}[section]{\bfseries}{\rmfamily}

\usepackage{algorithm}
\usepackage[noEnd=false,indLines=true]{algpseudocodex}
\usepackage{multirow}
\DeclareMathOperator*{\argmin}{arg\,min}
\DeclareMathOperator{\prox}{prox}
\DeclareMathOperator{\soft}{soft}
\DeclareMathOperator{\Fix}{Fix}
\DeclareMathOperator{\supp}{supp}
\DeclareMathOperator{\dist}{dist}
\DeclareMathOperator{\dom}{dom}

\newcommand{\R}{\mathbb R}
\newcommand{\D}{\mathcal D}

\newcommand{\ip}[2]{\left\langle #1,#2\right\rangle}
\newcommand{\norm}[1]{\left\lVert #1\right\rVert}

\begin{document}

\title{Restarted Fixed-Point Acceleration of Proximal ADMM for Capped-$\ell_1$ Penalized Optimization}
\titlerunning{Restarted Fixed-Point Acceleration of Proximal ADMM}

\author{Hongyuan Tao \and Fan Wu \and Wei Bian}

\institute{
Hongyuan Tao \at
School of Mathematics, Harbin Institute of Technology \\
Harbin, China \\
taohy163@163.com
\and
Fan Wu \at
School of Mathematics, Harbin Institute of Technology \\
Harbin, China \\
wufanmath@163.com
\and
Wei Bian, Corresponding author \at
School of Mathematics, Harbin Institute of Technology \\
Harbin, China \\
bianweilvse520@163.com
}
\date{Received: date / Accepted: date}
\maketitle

\begin{abstract}
We consider box-constrained optimization with a convex, Lipschitz continuous loss function that may be nonsmooth and a capped-$\ell_1$ penalty. Exploiting the convex branch
representation of the capped-$\ell_1$ penalty, we construct an
associated nonexpansive fixed-point mapping for each branch and,
based on these mappings, propose a restarted accelerated proximal
alternating direction method of multipliers (RA-pADMM). The resulting algorithm consists of a preliminary
branch identification phase and a subsequent accelerated phase
based on either the Halpern or fast
Krasnosel'ski\u{\i}--Mann iteration, together with a
sufficient decrease restart mechanism. We prove that the preliminary phase terminates after finitely many
iterations and that only finitely many branch updates are accepted
during the subsequent phase. The
primal iterates and proximal outputs converge to a common lifted
stationary point, which is also a $\nu$-strong local minimizer of
the corresponding cardinality penalized problem. Moreover, the
Halpern and fast Krasnosel'ski\u{\i}--Mann variants achieve
nonergodic $O(k^{-1})$ and asymptotic $o(k^{-1})$ convergence
rates, respectively, for both the objective value gap and the
Karush--Kuhn--Tucker residual associated with the final
branch. Numerical experiments on sparse recovery and real data
classification problems demonstrate the effectiveness of the proposed
RA-pADMM algorithm.
\end{abstract}

\keywords{capped-$\ell_1$ penalty \and proximal ADMM \and Halpern iteration \and fast Krasnosel'ski\u{\i}--Mann iteration}
\subclass{90C26 \and 90C30 \and 65K05}

\section{Introduction}\label{sec:introduction}

In this paper, we consider the box-constrained cardinality penalized problem
\begin{equation}\label{model_l0}
    \min_{x\in X}\;F_{\ell_0}(x):=f(Ax)+\lambda\|x\|_0,
\end{equation}
where $A\in\mathbb{R}^{m\times n}$, $\lambda>0$,
$X=\prod_{i=1}^n[l_i,u_i]$ is a nonempty box set, and
$f:\mathbb{R}^m\to\mathbb{R}$ is convex and possibly nonsmooth.
The $\ell_0$ penalty is a natural and widely used choice for promoting sparsity \cite{Donoho2006,BrucksteinDonohoElad2009}, as it directly penalizes the number of nonzero components, while the loss function $f(Ax)$ measures the data-fitting quality. However, even when $f$ is convex, problem~\eqref{model_l0} remains nonconvex and combinatorial due to the discontinuous cardinality term, and is generally difficult to solve globally \cite{Natarajan1995}.

To retain the sparsity-inducing structure of the cardinality penalty
while avoiding its discontinuity, a common strategy is to replace the
$\ell_0$ term with a continuous nonconvex surrogate
\cite{FanLi2001,LvFan2009,Zhang2010,LeThiEtAl2015,
GassoEtAl2009,SoubiesEtAl2015,SoubiesEtAl2017,AhnPangXin2017}. Essafri et al.~\cite{EssafriCalatroniSoubies2025} recently
developed exact continuous relaxations based on Bregman distances
for $\ell_0$-regularized problems with non-quadratic data terms.
Among these continuous relaxations, we focus on the capped-$\ell_1$ penalty, which leads to the following relaxation model:
\begin{equation}\label{model}
    \min_{x\in X}\;F(x):=f(Ax)+\lambda\Phi(x),
    \qquad
    \Phi(x):=\sum_{i=1}^n
    \min\left\{\frac{|x_i|}{\nu},1\right\},
\end{equation}
where $\nu>0$ is a cap parameter.
The capped-$\ell_1$ penalty has a simple piecewise-linear structure, growing linearly near zero and remaining constant once the magnitude of the variable reaches $\nu$. Thus, it preserves the sparsity-inducing effect of the cardinality penalty while alleviating its discontinuity.
Under suitable conditions, problem~\eqref{model} is an exact continuous relaxation of problem~\eqref{model_l0} with the same set of global minimizers, and every lifted stationary point of problem~\eqref{model} is a $\nu$-strong local minimizer of problem~\eqref{model_l0}~\cite{bian2020}.
These properties provide a theoretical foundation for studying problem~\eqref{model} as a tractable continuous reformulation of problem~\eqref{model_l0}.

Our main concern in this paper is to solve problem~\eqref{model} more efficiently.
Bian and Chen~\cite{bian2020} proposed a smoothing proximal gradient
method, and under their stated assumptions, established an
objective value convergence rate of $o(k^{-\sigma})$ for any
$\sigma\in(0,1/2)$, together with convergence of the whole sequence
to a lifted stationary point of problem~\eqref{model}.
More recently, Tao et al.~\cite{TaoBianQu2026} developed an
accelerated nested proximal-gradient method based on Moreau-envelope
smoothing and selective extrapolation. Their accelerated scheme attains
an $O(1/k)$ objective value convergence rate but guarantees only
subsequential convergence to lifted stationary points. They also
proposed a modified extrapolation scheme that ensures convergence of
the entire sequence, possibly with a weaker theoretical rate.
These results raise the question of whether an $O(1/k)$ rate can be
achieved while preserving convergence of the entire sequence.

To exploit the separable structure of problem~\eqref{model}, we
introduce an auxiliary variable $y=Ax$. The resulting formulation allows us to apply either the alternating
direction method of multipliers (ADMM) or Douglas--Rachford
splitting (DRS) to solve it
\cite{LionsMercier1979,EcksteinBertsekas1992,BoydEtAl2011}.
For convex problems, both methods admit fixed-point interpretations
\cite{Rockafellar1976,EcksteinBertsekas1992,BauschkeCombettes2017},
which enables the use of accelerated fixed-point iterations.
In particular, for suitable nonexpansive mappings, the Halpern
iteration~\cite{Halpern1967} attains an $O(1/k)$ fixed-point
residual rate~\cite{Lieder2021}, whereas the fast
Krasnosel'ski\u{\i}--Mann (fast KM) iteration achieves the asymptotic
rate $o(1/k)$~\cite{BotNguyen2023}.
These residual estimates yield convergence rates for the
objective function value gap and the
Karush--Kuhn--Tucker (KKT) residual~\cite{SunYuanZhangZhao2025}. 
For ADMM-type methods applied to convex problems, $O(1/k)$ convergence rates for the
objective value gap and primal feasibility are commonly established
in an ergodic sense \cite{HeYuan2012,TangToh2024}. Recently, Sun et al.~\cite{SunYuanZhangZhao2025} reformulated preconditioned
ADMM as a degenerate proximal-point mapping
\cite{BrediesSun2017,BrediesEtAl2022} and incorporated Halpern and fast KM iterations into this framework. For the convex problem considered therein, the two resulting schemes attain, respectively, a nonasymptotic $O(1/k)$ rate and an asymptotic $o(1/k)$ rate for both the KKT residual and the primal objective value gap. Moreover, the iterates generated by both schemes are convergent. 
These results, which combine global convergence of the sequence with nonergodic convergence rates, motivate us to explore the use of the Halpern and fast KM acceleration mechanisms in the setting considered here.
The Halpern iteration has also been used to accelerate
operator splitting methods in various applications. Zhang et al.~\cite{ZhangZhangWang2025}
proposed a Halpern-type preconditioned proximal point algorithm
and applied it to image restoration.
Chen et al.~\cite{ChenEtAl2026HPRLP} developed HPR-LP, a linear
programming solver based on the Halpern Peaceman--Rachford (HPR)
method with adaptive restarts and penalty parameter updates.


One main challenge is that the capped-$\ell_1$ penalty makes
problem~\eqref{model} nonconvex, so the fixed-point reformulation
of ADMM used in \cite{SunYuanZhangZhao2025} to analyze convergence
for convex problems is not directly applicable. To address this issue, we exploit the difference-of-convex (DC) representation of the capped-$\ell_1$ penalty used in \cite{bian2020}. Then, the problem \eqref{model} can be equivalently written in the following DC form:
\begin{equation}\label{eq:dc-decomposition}
 \min_{x\in\mathbb{R}^n} \mathcal F(x)
  := f(Ax)+ \delta_X(x)
  + \frac{\lambda}{\nu}\|x\|_1
  - \lambda\sum_{i=1}^n
  \max_{r\in\{1,2,3\}}\theta_r(x_i),
\end{equation}
where $\theta_1(t)=0$, $\theta_2(t)=\frac{t}{\nu}-1$,
and $\theta_3(t)=-\frac{t}{\nu}-1$;
$\delta_X$ denotes the indicator function of $X$.
We note that the branch function 
$$\Phi^d(x):=\frac{1}{\nu}\|x\|_1-\sum_{i=1}^n\theta_{d_i}(x_i)$$ 
is convex for every fixed branch $d\in\D:=\{1,2,3\}^n$. Thus, instead of applying a fixed-point acceleration scheme directly to the proximal ADMM (pADMM) with the nonconvex penalty $\Phi$, we fix a branch $d$ at each iteration and work with the corresponding branch function $\Phi^d$. For each such branch, we construct an associated nonexpansive fixed-point mapping and incorporate the fixed-point acceleration mechanism into the resulting convex subproblem. This leads to a restarted accelerated proximal alternating direction method of multipliers (RA-pADMM) for solving problem~\eqref{model}. The main contributions of this paper are summarized as follows.
\begin{itemize}
    \item We reformulate the pADMM update for each convex
  branch as a fixed-point iteration.
  We further establish the nonexpansiveness of both the branch
  mapping and its relaxed counterpart with respect to a suitable weighted norm.

    \item  We propose the RA-pADMM algorithm, which incorporates Halpern
    and fast KM acceleration
    into a two-phase framework with a sufficient decrease
    restart mechanism. We prove that the preliminary phase terminates after finitely many
iterations and that only finitely many branch updates are accepted
during the subsequent phase.

 \item We prove that the primal iterates and proximal outputs
  converge to a common lifted stationary point of
  problem~\eqref{model}, which is also a $\nu$-strong local
  minimizer of problem~\eqref{model_l0}. For the Halpern and
    fast KM variants, we establish nonergodic
    $O(k^{-1})$ and asymptotic $o(k^{-1})$ rates,
    respectively, for the objective value gap and
    Karush--Kuhn--Tucker (KKT) residual associated
    with the final branch. Numerical experiments
    demonstrate the effectiveness of the proposed
    algorithm.
\end{itemize}

The remainder of the paper is organized as follows.
Section~\ref{sec:preliminaries} develops the branchwise pADMM formulation, constructs its fixed-point representation, and establishes the nonexpansiveness of the resulting operator.
Section~\ref{sec:algorithm} presents the RA-pADMM algorithm and establishes finite termination of the
preliminary identification phase, eventual branch consistency,
convergence of the entire sequence, and the corresponding convergence
rates. Section~\ref{sec:numerical} reports the numerical experiments,
and Section~\ref{sec:conclusion} concludes the paper.

\section{Branchwise pADMM and its fixed-point representation}
\label{sec:preliminaries}

We denote the set of nonnegative integers by
$\mathbb N:=\{0,1,2,\ldots\}$. We use $\langle\cdot,\cdot\rangle$ and $\|\cdot\|$ for the Euclidean inner product and norm, respectively, $\|\cdot\|_1$ for the $\ell_1$-norm, and $\|A\|$ for the induced operator norm of a matrix $A$. Let $I_{n}$ denote the $n\times n$ identity matrix. For a symmetric positive semidefinite matrix $M$, we define
$\|z\|_M:=\sqrt{\langle z,Mz\rangle}$; when $M\succ0$, let
$\langle u,v\rangle_M:=\langle u,Mv\rangle$. For a proper closed convex function $h$, $\dom h$, $\partial h$, $h^*$, and $\prox_{\gamma h}$ denote its effective domain, convex subdifferential, Fenchel conjugate, and proximal mapping, respectively, where $\prox_{\gamma h}(v):=\argmin_z\{\gamma h(z)+\frac12\|z-v\|^2\}$. For a nonempty closed convex set $C$, $\Pi_C$ denotes
its Euclidean projector, and $\dist(z,C)$ denotes
the distance from $z$ to $C$. For a mapping $T$, let $\Fix T:=\{z:Tz=z\}$. Let $\lceil t\rceil$ denote the
smallest integer greater than or equal to $t$. Define $\mathcal D^x:=\{d\in\D:d_i\in\arg\max_{r\in\{1,2,3\}}\theta_r(x_i),\ i=1,\ldots,n\}$. 
We define $(t)_+:=\max\{t,0\}$, $\supp(x):=\{i:x_i\neq0\}$ and 
\begin{equation}\label{new}
\omega := \min\{ |l_i|, |u_j| : l_i \neq 0, u_j \neq 0, i = 1,2,\ldots,n, j = 1,2,\ldots,n \}.
\end{equation}

With the notation introduced above, we next provide two solution concepts used throughout the paper.

\begin{definition}[{\cite[Definition 2.1]{bian2020}}]\label{def:lifted-stationary}
A point $x\in X$ is called a \emph{lifted stationary point} of problem
\eqref{model} if there exists $d\in\mathcal D^x$ such that
\begin{equation}\label{eq:lifted-stationarity}
  \lambda\sum_{i=1}^n\theta_{d_i}'(x_i)\mathbf e_i
  \in
  A^\top\partial f(Ax)
  +\frac{\lambda}{\nu}\partial\|x\|_1
  +N_X(x),
\end{equation}
where $N_X(x)$ denotes the normal cone to $X$ at $x$, and
$\mathbf e_i$ denotes the $i$th column of the identity
matrix $I_n$.
\end{definition}

For simplicity, we define 
\begin{equation}\label{eq:gap-set}
  \mathcal G_\nu:=
  \{x\in X:x_i=0\ \text{or}\ |x_i|\geq\nu,\ i=1,\ldots,n\}.
\end{equation}

\begin{definition}[{\cite[Definition 2.6]{bian2020}}]\label{def:strong-local-minimizer}
A point $\tilde x\in X$ is called a $\nu$-strong local
minimizer of problem~\eqref{model_l0} if $\tilde x\in\mathcal G_\nu$ and it is a local
minimizer of \eqref{model_l0}.
\end{definition}

For any branch $d\in\mathcal{D}$, we define 
\begin{equation}\label{eq:branch}
  g_d(x):=\lambda\Phi^d(x)+\delta_X(x)
\end{equation}
and
\[
   F_d:=f\circ A+\lambda\Phi^d,
  \qquad
  \mathcal F_d:=f\circ A+g_d.
\]
By the definition of $\Phi$, we have $\Phi^d(x)\geq\Phi(x)$ for every $d\in\D$, and $\Phi^d(x)=\Phi(x)$ for every
$d\in\mathcal D^x$. 
Unless otherwise stated, the following assumptions are maintained throughout the remainder of this paper.

\begin{assumption}\label{ass1}
The function $f:\R^m\to\R$ is convex and $L_f$-Lipschitz
continuous on $\R^m$ for some $L_f>0$.
\end{assumption}

\begin{assumption}\label{ass2}
The cap parameter $\nu$ satisfies $0<\nu<\min\left\{\omega, \frac{\lambda}{L_f\norm{A}}\right\}$.
\end{assumption}

\begin{assumption}\label{ass3}
For every $d\in\D$,
$\argmin_{x\in\mathbb R^n}\mathcal F_d(x)\neq\varnothing$.
\end{assumption}

\begin{remark}\label{rem:branch-solvability}
Assumption~\ref{ass3} is satisfied if $X$ is bounded or $f$ is level-bounded on $A(X)$, where $A(X):=\{Ax:x\in X\}$.
Indeed, for each $d\in\mathcal D$, define
$\psi_d(u):=\inf\{g_d(x):Ax=u\}$. Since $g_d$ is a proper closed polyhedral convex function
that is bounded below by $0$, $\psi_d$ is also a proper closed
polyhedral convex function
\cite[Corollary~19.3.1]{Rockafellar1970},
and then the infimum defining $\psi_d(u)$ is attained
for every $u\in A(X)$. Moreover,
$\psi_d\geq0$ and
$\inf_x\mathcal F_d(x)=\inf_u\{f(u)+\psi_d(u)\}$. When $f$ is level-bounded on $A(X)$, the function $f+\psi_d$ is proper, lower semicontinuous, and level-bounded, and hence attains its minimum. Since $\mathcal D$ is finite and
$\mathcal F(x)=\min_{d\in\mathcal D}\mathcal F_d(x)$,
Assumption~\ref{ass3} also ensures that 
$\mathcal F_{\inf}:=\min_{x\in\mathbb R^n}\mathcal F(x)>-\infty$.
\end{remark}

The following lemma shows that the branch index associated with a lifted
stationary point of problem~\eqref{model} is unique. This motivates the choice of the branch indices in the algorithm
developed in the next section.

\begin{lemma}[{\cite[Proposition 2.2, Lemma 2.3]{bian2020}}]\label{lmdk}
If $x^*$ is a lifted stationary point of problem~\eqref{model},
then $x^*\in\mathcal G_\nu$, where $\mathcal G_\nu$ is defined as in \eqref{eq:gap-set}. 
Moreover, the unique element $d^{x^*}\in\mathcal D^{x^*}$ for which \eqref{eq:lifted-stationarity} holds is given by
\begin{equation}\label{lmdk-1}
d^{x^*}_i=
\begin{cases}
1, & |x^*_i|<\nu,\\
2, & x^*_i\geq\nu,\\
3, & x^*_i\leq-\nu,
\end{cases}
\qquad i=1,\ldots,n.
\end{equation}
\end{lemma}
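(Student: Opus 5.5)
The plan is to work coordinatewise from the inclusion \eqref{eq:lifted-stationarity}. Since $\partial\|x\|_1$ and $N_X(x)$ are both products of one-dimensional sets, the inclusion can be rewritten as follows. There is a vector $\xi\in\partial f(Ax^*)$ such that, for each $i$,
\[
\lambda\theta_{d_i}'(x^*_i)-(A^\top\xi)_i\in\frac{\lambda}{\nu}\partial|x^*_i|+N_{[l_i,u_i]}(x^*_i).
\]
Assumption~\ref{ass1} gives $\|\xi\|\le L_f$, and hence $|(A^\top\xi)_i|\le \|A\|L_f<\lambda/\nu$ by Assumption~\ref{ass2}. This strict bound, together with Assumption~\ref{ass2} placing $\nu$ strictly below $\omega$, is the engine of the whole argument.

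\emph{Step 1: $x^*\in\mathcal G_\nu$.} Suppose $0<|x^*_i|<\nu$. Then $\mathcal D^{x^*}$ forces $d_i=1$, so $\theta_{d_i}'=0$. Because $\nu<\omega$, the point $x^*_i$ cannot equal a nonzero bound $l_i$ or $u_i$. It also cannot equal a zero bound, since $x^*_i\neq0$. So $x^*_i$ is interior to $[l_i,u_i]$ and the normal cone is $\{0\}$. The inclusion then reads $-(A^\top\xi)_i=\frac{\lambda}{\nu}\operatorname{sign}(x^*_i)$, which forces $|(A^\top\xi)_i|=\lambda/\nu$ and contradicts the strict bound. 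Hence every coordinate satisfies $x^*_i=0$ or $|x^*_i|\ge\nu$.

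\emph{Step 2: uniqueness of the branch.} If $|x^*_i|>\nu$, the argmax of $\theta_r(x^*_i)$ is a singleton: $\{2\}$ for $x^*_i>\nu$ and $\{3\}$ for $x^*_i<-\nu$. If $x^*_i=0$, the argmax is $\{1\}$. So \eqref{lmdk-1} is forced in these cases, and the only ambiguity is at $|x^*_i|=\nu$.

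Take $x^*_i=\nu$, where $d_i\in\{1,2\}$, and suppose $d_i=1$. Since $\nu<\omega$, the value $\nu$ is not a bound: $l_i\le0$, and if $u_i>0$ then $u_i\ge\omega>\nu$. So $\nu$ is interior and the normal cone vanishes. The inclusion gives $-(A^\top\xi)_i=\lambda/\nu$, a contradiction. Hence $d_i=2$, and the case $x^*_i=-\nu$ symmetrically forces $d_i=3$. This proves that any $d\in\mathcal D^{x^*}$ satisfying \eqref{eq:lifted-stationarity} must equal $d^{x^*}$ in \eqref{lmdk-1}. Such a $d$ exists by the definition of a lifted stationary point, so it is unique.

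The main obstacle is the careful handling of the normal cone at boundary points. The key observation is that $\nu<\omega$ keeps all the critical values $0<|t|\le\nu$ away from the nonzero box bounds, so $N_{[l_i,u_i]}$ is trivial exactly where the contradiction is needed. A formal subtlety is that $\theta_{d_i}'$ is a genuine derivative, since each $\theta_r$ is affine, so no subdifferential of the concave part arises.
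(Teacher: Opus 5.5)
Your argument is correct and is essentially the standard coordinatewise proof behind the cited results of Bian and Chen; the paper itself gives no proof and only cites them. You fix one $\xi\in\partial f(Ax^*)$, use $\|A^\top\xi\|_\infty\le L_f\|A\|<\lambda/\nu$, and use $\nu<\omega$ to make $N_{[l_i,u_i]}(x_i^*)=\{0\}$ at exactly the coordinates with $0<|x_i^*|\le\nu$ where the contradiction is needed (the claim $l_i\le 0$ in the case $x_i^*=\nu$ deserves its one-line justification: $l_i>0$ would force $l_i\ge\omega>\nu=x_i^*$).
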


Throughout the remainder of this paper, for any $x\in X$,
we define $d^x$ by the same componentwise rule
in \eqref{lmdk-1}. Then $d^x\in\mathcal D^x$.

For a fixed $d\in\mathcal D$, the convex branch problem $\min_{x}\mathcal{F}_{d}(x)$ admits the
equivalent splitting formulation
\begin{equation}\label{eq:convex-branch-problem}
  \min_{x,y}\; g_d(x)+f(y)
  \qquad\text{subject to}\qquad Ax-y=0.
\end{equation}
A triple $(x,y,p)\in\mathbb{R}^n\times \mathbb{R}^m\times \mathbb{R}^m$ is said to satisfy the KKT system for problem \eqref{eq:convex-branch-problem} if
\begin{equation}\label{eq:branch-kkt}
  0\in\partial g_d(x)+A^\top p,
  \qquad
  p\in\partial f(y),
  \qquad
  Ax-y=0.
\end{equation}
The following remark shows that a lifted stationary point of problem \eqref{model}
can be obtained by solving the above convex branch problem.

\begin{remark}\label{eq_lifed}
A point $x^*\in X$ is a lifted stationary point of problem \eqref{model}
if and only if there exist $d^*\in\mathcal D^{x^*}$ and $p^*\in\R^m$
such that
\begin{equation}\label{eq:lifted-kkt}
  0\in\partial g_{d^*}(x^*)+A^\top p^*,
  \qquad
  p^*\in\partial f(Ax^*).
\end{equation}
Thus, if $(x^*,Ax^*,p^*)$ satisfies the KKT
system~\eqref{eq:branch-kkt} for some
$d^*\in\mathcal D^{x^*}$, then $x^*$ is a lifted stationary
point of problem~\eqref{model}.
\end{remark}

To solve \eqref{eq:convex-branch-problem}, we define $P:=\sigma(\eta I_n -A^\top A)$ and the augmented Lagrangian
\begin{equation}\label{eq:lagrangian}
  \mathcal L_\sigma^d(x,y,p)
  :=g_d(x)+f(y)+\ip{p}{Ax-y}
  +\frac{\sigma}{2}\norm{Ax-y}^2,
\end{equation}
where $\sigma>0$ and $\eta>\norm{A}^2$. The choice of $\eta$ ensures that \(P\succ0.\)
Given an input $w=(x,p)$, one pADMM call for branch $d$
takes the form
\begin{equation}\label{eq:padmm-subproblems}
\left\{
\begin{aligned}
  \bar y
  &=\arg\min_y\left\{
  f(y)-\ip{p}{y}+\frac{\sigma}{2}\norm{Ax-y}^2
  \right\},\\
  \bar x
  &=\arg\min_{x'}\left\{
  \mathcal L_\sigma^d(x',\bar y,p)
  +\frac12\norm{x'-x}_P^2
  \right\},\\
  \bar p
  &=p+\sigma(A\bar x-\bar y).
\end{aligned}
\right.
\end{equation}

Let $\mathcal W:=\R^n\times\R^m$ and
$\tau:=1/(\sigma\eta)$. For each $d\in\D$,
define the branch fixed-point mapping
$S_d:\mathcal W\to\mathcal W$ at
$w=(x,p)\in\mathcal W$ by
\begin{equation}\label{eq:S-map}
  S_d(w)
  :=
  \binom{
    \prox_{\tau g_d}
    \bigl(x-\tau A^\top\xi(w)\bigr)
  }{
    \xi(w)+\sigma A\bigl(
      \prox_{\tau g_d}
      \bigl(x-\tau A^\top\xi(w)\bigr)-x
    \bigr)
  },
\end{equation}
where $\xi(w)
  :=
  \prox_{\sigma f^*}(p+\sigma Ax)$.

The following proposition shows that the iteration defined by
$S_d$ is equivalent to the pADMM iteration for branch $d$.

\begin{proposition}\label{prop:padmm-fixed-point}
For any $d\in\D$ and $w=(x,p)\in\R^{n+m}$, let
$(\bar y,\bar x,\bar p)$ be generated by
\eqref{eq:padmm-subproblems}. Then
\(S_d(w)=(\bar x,\bar p)\), and
\begin{equation}\label{eq:fix-Sd}
\Fix S_d
=
\left\{
(x,p):
0\in\partial g_d(x)+A^\top p,\ 
p\in\partial f(Ax)
\right\}.
\end{equation}
Moreover, if $w=(x,p)\in\Fix S_d$, then
$(x,Ax,p)$ satisfies the KKT system
\eqref{eq:branch-kkt} for
problem~\eqref{eq:convex-branch-problem}.
\end{proposition}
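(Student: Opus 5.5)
The plan is to verify each of the three pADMM subproblems in \eqref{eq:padmm-subproblems} against the corresponding component of \eqref{eq:S-map}, using the Moreau decomposition for the $y$-step and the identity $P+\sigma A^\top A=\sigma\eta I_n$ for the $x$-step.

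\emph{The $y$-step.} The optimality condition of the $y$-subproblem is
\[
p+\sigma(Ax-\bar y)\in\partial f(\bar y).
\]
Set $q:=p+\sigma(Ax-\bar y)$, so that $\bar y=Ax+(p-q)/\sigma$. Then $q\in\partial f(\bar y)$ is equivalent to $\bar y\in\partial f^*(q)$, that is,
\[
0\in\sigma\partial f^*(q)+q-(p+\sigma Ax).
\]
Hence $q=\prox_{\sigma f^*}(p+\sigma Ax)=\xi(w)$. The two facts to carry forward are $\xi(w)\in\partial f(\bar y)$ and $\bar y=Ax+(p-\xi(w))/\sigma$.

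\emph{The $x$-step.} Expand $\mathcal L^d_\sigma(x',\bar y,p)+\tfrac12\|x'-x\|_P^2$. Because $P+\sigma A^\top A=\sigma\eta I_n$, the quadratic part in $x'$ is $\frac{\sigma\eta}{2}\|x'\|^2$. The linear part is $\langle x',\,A^\top p-\sigma A^\top\bar y-Px\rangle$. Substituting $\bar y$ gives
\[
A^\top p-\sigma A^\top\bar y-Px = A^\top\xi(w)-\sigma\eta x.
\]
Completing the square, the subproblem is
\[
\min_{x'}\; g_d(x')+\frac{\sigma\eta}{2}\bigl\|x'-\bigl(x-\tau A^\top\xi(w)\bigr)\bigr\|^2,
\]
so $\bar x=\prox_{\tau g_d}\bigl(x-\tau A^\top\xi(w)\bigr)$. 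The minimizer exists and is unique because $g_d$ is proper, closed and convex: it is polyhedral on the box $X$.

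\emph{The $p$-step.} We have $\bar p=p+\sigma A\bar x-\sigma\bar y$. Substituting $\sigma\bar y=\sigma Ax+p-\xi(w)$ gives
\[
\bar p=\xi(w)+\sigma A(\bar x-x),
\]
which is the second component of $S_d(w)$. This proves $S_d(w)=(\bar x,\bar p)$.

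\emph{Characterizing $\Fix S_d$.} Suppose $S_d(w)=w$. From the $p$-component, $p=\xi(w)$. Then $\xi(w)=\prox_{\sigma f^*}(p+\sigma Ax)$ gives $\sigma Ax\in\sigma\partial f^*(p)$, i.e.\ $p\in\partial f(Ax)$. From the $x$-component, $x=\prox_{\tau g_d}(x-\tau A^\top p)$, which gives $-\tau A^\top p\in\tau\partial g_d(x)$, i.e.\ $0\in\partial g_d(x)+A^\top p$.

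Conversely, if both inclusions hold, reverse these steps. First $p\in\partial f(Ax)$ gives $\xi(w)=p$. Then $0\in\partial g_d(x)+A^\top p$ gives that the prox returns $x$. The second component then equals $p+\sigma A(x-x)=p$, so $w\in\Fix S_d$.

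\emph{KKT conclusion.} For $w\in\Fix S_d$, take $y=Ax$. Then $Ax-y=0$, $p\in\partial f(y)$ and $0\in\partial g_d(x)+A^\top p$, which is exactly \eqref{eq:branch-kkt}.

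The only mildly delicate point is the completion of the square in the $x$-step, which relies on the specific choice $P=\sigma(\eta I_n-A^\top A)$. The rest consists of routine applications of the Moreau identity and prox optimality conditions.
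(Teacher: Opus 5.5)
Your proposal is correct and follows essentially the same route as the paper: reduce the subproblems to the explicit form with $\xi(w)=\prox_{\sigma f^*}(p+\sigma Ax)$, then read off $\Fix S_d$ from the prox optimality conditions. You also verify the completion of the square via $P+\sigma A^\top A=\sigma\eta I_n$, which the paper only asserts. You phrase the fixed-point characterization through $p=\xi(w)$ and $\partial f^*$ rather than through $\bar y=Ax$ and $\prox_{f/\sigma}$. Note that concluding $p=\xi(w)$ from the second component uses $\bar x=x$ from the first.
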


\begin{proof}
The pADMM update \eqref{eq:padmm-subproblems} can be
equivalently written as
\begin{equation}\label{eq:padmm-explicit}
\left\{
\begin{aligned}
  \bar y
  &=\prox_{f/\sigma}
  \left(Ax+\frac{p}{\sigma}\right),\\
  \xi
  &=p+\sigma(Ax-\bar y),\\
  \bar x
  &=\prox_{\tau g_d}(x-\tau A^\top\xi),\\
  \bar p
  &=\xi+\sigma A(\bar x-x).
\end{aligned}
\right.
\end{equation}
By Moreau's decomposition
\cite[Theorem~14.3(ii)]{BauschkeCombettes2017}, we have
\[
\begin{aligned}
  \xi
  &=p+\sigma Ax
  -\sigma\prox_{f/\sigma}
  \left(Ax+\frac{p}{\sigma}\right)\\
  &=\prox_{\sigma f^*}(p+\sigma Ax)
  =\xi(w).
\end{aligned}
\]
Hence, by the last two equalities in
\eqref{eq:padmm-explicit} and the definition of $S_d$ in
\eqref{eq:S-map}, we obtain
$S_d(w)=(\bar x,\bar p)$. Consequently, $w=(x,p)\in\Fix S_d$ if and only if
$x=\bar x$ and $p=\bar p$. Using
\eqref{eq:padmm-explicit}, we further obtain
\[
w=(x,p)\in\Fix S_d
\quad\Longleftrightarrow\quad
\left\{
\begin{aligned}
  x
  &=\prox_{\tau g_d}(x-\tau A^\top p),\\
  Ax
  &=\prox_{f/\sigma}
  \left(Ax+\frac{p}{\sigma}\right).
\end{aligned}
\right.
\]
By the first-order optimality conditions, the two equalities on
the right-hand side are equivalent to
\[
\left\{
\begin{aligned}
  0&\in\partial g_d(x)+A^\top p,\\
  p&\in\partial f(Ax).
\end{aligned}
\right.
\]
This proves \eqref{eq:fix-Sd}. Hence, if
$w=(x,p)\in\Fix S_d$, then $(x,Ax,p)$ satisfies the KKT system
\eqref{eq:branch-kkt} of
problem~\eqref{eq:convex-branch-problem}, which completes the
proof.
\qed

\end{proof}

Assumption~\ref{ass3} ensures that
$\Fix S_d\neq\varnothing$ for every $d\in\D$.
Moreover, the proof of
Proposition~\ref{prop:padmm-fixed-point}
shows that, for $w=(x,p)\in\mathcal W$,
the $\bar x$-update in
$\bar w=(\bar x,\bar p)=S_d(w)$
coincides with that in \eqref{eq:padmm-explicit},
and $\xi(w)$ coincides with $\xi$.
Hence, we have the following remark.

\begin{remark}\label{rem:x-explicit}
Let $d\in\D$, $w=(x,p)\in\mathcal W$,
and $\bar w=(\bar x,\bar p)=S_d(w)$.
Define $s(d)\in\R^n$ componentwise by
$[s(d)]_i=0,1,-1$ when $d_i=1,2,3$,
respectively. Then the $\bar x$-update
in $\bar w=S_d(w)$ admits the
componentwise closed-form representation
\begin{equation}\label{eq:sparse-update}
  \bar x_i
  =
  \Pi_{[l_i,u_i]}\left[
  \soft\left(
  q_i+\frac{\tau\lambda}{\nu}[s(d)]_i,
  \frac{\tau\lambda}{\nu}
  \right)
  \right],
  \qquad i=1,\ldots,n,
\end{equation}
where $q=x-\tau A^\top\xi$, $\xi$ is defined in
\eqref{eq:padmm-explicit}, and
$\soft(t,a):=\operatorname{sign}(t)(|t|-a)_+$.
\end{remark}

Define
\begin{equation}\label{eq:H}
  H:=
  \begin{pmatrix}
    \tau^{-1}I_n&0\\
    0&\sigma^{-1}I_m
  \end{pmatrix}.
\end{equation}
The matrix $H$ is positive definite and hence induces the inner
product $\langle\cdot,\cdot\rangle_H$ and the associated norm
$\|\cdot\|_H$ on $\mathcal W$.
We next establish the nonexpansiveness of the branch map $S_d$ in the $H$-induced norm.

\begin{lemma}\label{lem:S-nonexpansive}
Suppose that $\eta>\|A\|^2$. Then, for every $d\in\D$, the mapping
$S_d:\mathcal W\to\mathcal W$ is nonexpansive with respect to
the norm induced by $H$, namely,
\begin{equation}\label{eq:S-nonexpansive-new}
  \|S_d(w_1)-S_d(w_2)\|_H
  \leq
  \|w_1-w_2\|_H,
  \qquad
  \forall\,w_1,w_2\in\mathcal W.
\end{equation}
\end{lemma}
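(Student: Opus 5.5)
The plan is to work directly with the firm nonexpansiveness of the two proximal mappings that make up $S_d$. These are $\prox_{\sigma f^*}$ and $\prox_{\tau g_d}$; both are firmly nonexpansive in the Euclidean norm because $f^*$ and $g_d$ are proper closed convex. I will not try to recast $S_d$ as a resolvent, since the dual output $\bar p$ is not the argument fed into $\partial f^*$.

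\textbf{Notation.} Fix $w_1,w_2\in\mathcal W$. For each quantity in \eqref{eq:padmm-explicit}, write $\Delta$ for its difference between the two inputs: $\Delta x$, $\Delta p$, $\Delta\xi$, $\Delta\bar x$, $\Delta\bar p$. Set $\Delta u:=\Delta\bar x-\Delta x$. By Proposition~\ref{prop:padmm-fixed-point}, $\Delta\bar p=\Delta\xi+\sigma A\Delta u$.

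\textbf{Step 1: the two firm-nonexpansiveness inequalities.} These give
\[
\tfrac1\sigma\|\Delta\xi\|^2\le \tfrac1\sigma\ip{\Delta\xi}{\Delta p}+\ip{\Delta\xi}{A\Delta x},
\qquad
\tfrac1\tau\|\Delta\bar x\|^2\le \tfrac1\tau\ip{\Delta\bar x}{\Delta x}-\ip{A\Delta\bar x}{\Delta\xi}.
\]

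\textbf{Step 2: expand the dual part.} Expanding gives $\tfrac1\sigma\|\Delta\bar p\|^2=\tfrac1\sigma\|\Delta\xi\|^2+2\ip{\Delta\xi}{A\Delta u}+\sigma\|A\Delta u\|^2$.

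\textbf{Step 3: eliminate $\Delta\xi$.} Add the two inequalities of Step 1 to the identity of Step 2. Then substitute $\Delta\xi=\Delta\bar p-\sigma A\Delta u$ in the remaining terms. The cross terms should collapse to
\[
\|S_d(w_1)-S_d(w_2)\|_H^2\le \ip{S_d(w_1)-S_d(w_2)}{w_1-w_2}_H+\ip{A\Delta u}{\Delta\bar p-\Delta p}.
\]
This bookkeeping, specifically that the $\sigma\|A\Delta u\|^2$ terms cancel exactly, is the step I expect to need the most care. It is routine, but a sign slip would wreck the conclusion.

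\textbf{Step 4: polarization.} Apply the identity $2\ip{a}{b}_H=\|a\|_H^2+\|b\|_H^2-\|a-b\|_H^2$ with $a=S_d(w_1)-S_d(w_2)$ and $b=w_1-w_2$. This yields
\[
\|S_d(w_1)-S_d(w_2)\|_H^2\le\|w_1-w_2\|_H^2-\Bigl(\tfrac1\tau\|\Delta u\|^2+\tfrac1\sigma\|\Delta\bar p-\Delta p\|^2-2\ip{A\Delta u}{\Delta\bar p-\Delta p}\Bigr).
\]

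\textbf{Step 5: the bracket is nonnegative.} By the AM--GM inequality and $\tau\sigma=1/\eta$,
\[
\tfrac1\tau\|\Delta u\|^2+\tfrac1\sigma\|\Delta\bar p-\Delta p\|^2\ge 2\sqrt{\eta}\,\|\Delta u\|\,\|\Delta\bar p-\Delta p\|.
\]
Since $\eta>\|A\|^2$, the right-hand side is at least $2\|A\|\,\|\Delta u\|\,\|\Delta\bar p-\Delta p\|\ge2\ip{A\Delta u}{\Delta\bar p-\Delta p}$. Hence the bracket is nonnegative, which gives \eqref{eq:S-nonexpansive-new}.

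The slack $\eta>\|A\|^2$ in fact leaves a strictly positive multiple of $\|(\Delta u,\Delta\bar p-\Delta p)\|^2$ in the bracket. That would give an averagedness-type estimate $\|Sw_1-Sw_2\|_H^2\le\|w_1-w_2\|_H^2-c\|(I-S_d)w_1-(I-S_d)w_2\|_H^2$ for some $c>0$, which may be useful later for the relaxed mapping.
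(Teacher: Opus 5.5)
Your argument is correct and is essentially the paper's: firm nonexpansiveness of $\prox_{\sigma f^*}$ and $\prox_{\tau g_d}$ is exactly the monotonicity of $\partial f^*$ and $\partial g_d$ used there. Moreover, with $a:=\Delta x-\Delta\bar x$ and $b:=\Delta p-\Delta\xi$ (so that $\Delta\bar p-\Delta p=-(b+\sigma Aa)$), your Step~4 bracket equals the paper's slack $\frac{1}{\tau}\|a\|^2-\sigma\|Aa\|^2+\frac{1}{\sigma}\|b\|^2$. The paper shows this is nonnegative directly via $\|Aa\|\le\|A\|\|a\|$, whereas your AM--GM bound on the residual form is the computation the paper carries out in the proof of Lemma~\ref{T-nonexpansive}, which yields the averagedness constant $c=1-\|A\|/\sqrt{\eta}$ you anticipated.
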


\begin{proof}
For $w_i=(x_i,p_i)\in\mathcal W$, $i\in\{1,2\}$, denote $\xi_i:=\xi(w_i)$, $v_i:=
\prox_{\tau g_d}
\bigl(x_i-\tau A^\top\xi_i\bigr)$, 
$a_i:=x_i-v_i$, and $b_i:=p_i-\xi_i$.
The optimality conditions for the two proximal mappings in
\eqref{eq:S-map} give
\begin{equation}\label{eq:prox-optimality-i}
  \frac{1}{\tau}a_i-A^\top\xi_i\in\partial g_d(v_i),
  \qquad
  \frac{1}{\sigma}b_i+Ax_i\in\partial f^*(\xi_i),
  \qquad i\in\{1,2\}.
\end{equation}
Then, by the monotonicity of $\partial g_d$ and $\partial f^*$
\cite[Theorem~20.25]{BauschkeCombettes2017}, we obtain
\begin{equation}\label{eq:monotone-new}
0\leq
\left\langle v_1-v_2,
\frac{a_1-a_2}{\tau}-A^\top(\xi_1-\xi_2)\right\rangle
+
\left\langle \xi_1-\xi_2,
\frac{b_1-b_2}{\sigma}+A(x_1-x_2)\right\rangle.
\end{equation}
Using the definition of $S_d$ in \eqref{eq:S-map}, together with
$x_i=v_i+a_i$ and $p_i=\xi_i+b_i$ for $i=1,2$,
we expand the squared norms to obtain
\begin{align}
&\|w_1-w_2\|_H^2
-\|S_d(w_1)-S_d(w_2)\|_H^2 \notag\\
&=
2\left\langle v_1-v_2,
\frac{a_1-a_2}{\tau}-A^\top(\xi_1-\xi_2)\right\rangle
+
2\left\langle \xi_1-\xi_2,
\frac{b_1-b_2}{\sigma}+A(x_1-x_2)\right\rangle \notag\\
&\quad+
\frac{1}{\tau}\|a_1-a_2\|^2
-\sigma\|A(a_1-a_2)\|^2
+\frac{1}{\sigma}\|b_1-b_2\|^2 \notag\\
&\geq
\frac{1}{\tau}\|a_1-a_2\|^2
-\sigma\|A(a_1-a_2)\|^2
+\frac{1}{\sigma}\|b_1-b_2\|^2 \notag\\
&\geq
\left(\frac{1}{\tau}-\sigma\|A\|^2\right)
\|a_1-a_2\|^2
+\frac{1}{\sigma}\|b_1-b_2\|^2
\geq0,
\label{S_d_ne}
\end{align}
where the first inequality follows from
\eqref{eq:monotone-new}, and the last follows from
$1/\tau=\sigma\eta$ and $\eta>\|A\|^2$.
Therefore, \eqref{eq:S-nonexpansive-new} holds. \qed
\end{proof}

For each $d\in\D$ and $\rho\in(0,2)$, define the relaxed branch
map
\begin{equation}\label{eq:T-map}
  T_{d,\rho}:=I+\rho(S_d-I),
\end{equation}
where $S_d$ is defined as in \eqref{eq:S-map}.

\begin{lemma}\label{T-nonexpansive}
Suppose that $\eta>\|A\|^2/\min\{1,2-\rho\}^2$.
Then, for every $d\in\D$, the mapping $T_{d,\rho}$ is
nonexpansive with respect to the norm induced by $H$, namely,
\begin{equation}\label{eq:T-nonexpansive}
  \|T_{d,\rho}w_1-T_{d,\rho}w_2\|_H
  \leq
  \|w_1-w_2\|_H,
  \qquad
  \forall\,w_1,w_2\in\mathcal W.
\end{equation}
\end{lemma}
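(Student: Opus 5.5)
The approach is to reuse the quantitative estimate \eqref{S_d_ne} from the proof of Lemma~\ref{lem:S-nonexpansive}, rather than only its conclusion. In the notation of that proof, set $\Delta w:=w_1-w_2$, $\Delta S:=S_d(w_1)-S_d(w_2)$, $\Delta a:=a_1-a_2$, $\Delta b:=b_1-b_2$. From \eqref{eq:S-map}, $x_i-v_i=a_i$ and $p_i-[S_d(w_i)]_p=b_i-\sigma A(v_i-x_i)=b_i+\sigma Aa_i$. Hence the residual $r:=\Delta w-\Delta S$ equals $(\Delta a,\ \Delta b+\sigma A\Delta a)$. Since $T_{d,\rho}w_1-T_{d,\rho}w_2=\Delta w-\rho r$, expanding the squares in $\|\cdot\|_H$ gives the identity
\[
\|\Delta w\|_H^2-\|\Delta w-\rho r\|_H^2
=2\rho\langle \Delta w,r\rangle_H-\rho^2\|r\|_H^2
=\rho\Bigl[\bigl(\|\Delta w\|_H^2-\|\Delta S\|_H^2\bigr)-(\rho-1)\|r\|_H^2\Bigr].
\]
For the second equality I use $\|\Delta w\|_H^2-\|\Delta S\|_H^2=2\langle\Delta w,r\rangle_H-\|r\|_H^2$.

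If $\rho\in(0,1]$, the bracket is nonnegative by Lemma~\ref{lem:S-nonexpansive}, and we are done. Here the hypothesis reduces to $\eta>\|A\|^2$. Now let $\rho\in(1,2)$ and write $c:=\rho-1\in(0,1)$. By \eqref{S_d_ne},
\[
\|\Delta w\|_H^2-\|\Delta S\|_H^2\ge \tfrac1\tau\|\Delta a\|^2-\sigma\|A\Delta a\|^2+\tfrac1\sigma\|\Delta b\|^2 .
\]
Also $\|r\|_H^2=\tfrac1\tau\|\Delta a\|^2+\tfrac1\sigma\|\Delta b+\sigma A\Delta a\|^2$. Subtracting $c\|r\|_H^2$ and using $1-c=2-\rho$, the bracket is bounded below by
\[
\frac{2-\rho}{\tau}\|\Delta a\|^2+\frac{2-\rho}{\sigma}\|\Delta b\|^2-2c\langle \Delta b,A\Delta a\rangle-\sigma\rho\|A\Delta a\|^2 .
\]

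The main step is to show this quadratic form is nonnegative. Apply Young's inequality to the cross term:
\[
2c|\langle\Delta b,A\Delta a\rangle|\le \frac{2-\rho}{\sigma}\|\Delta b\|^2+\frac{c^2\sigma}{2-\rho}\|A\Delta a\|^2 .
\]
This absorbs the $\Delta b$ term entirely and leaves
\[
\frac{2-\rho}{\tau}\|\Delta a\|^2-\sigma\Bigl(\rho+\frac{(\rho-1)^2}{2-\rho}\Bigr)\|A\Delta a\|^2
=\sigma\Bigl[(2-\rho)\eta\|\Delta a\|^2-\frac{1}{2-\rho}\|A\Delta a\|^2\Bigr],
\]
using $1/\tau=\sigma\eta$ and $\rho(2-\rho)+(\rho-1)^2=1$. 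This is nonnegative because $\|A\Delta a\|\le\|A\|\|\Delta a\|$ and $\eta>\|A\|^2/(2-\rho)^2$.

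Combining the two cases, the hypothesis $\eta>\|A\|^2/\min\{1,2-\rho\}^2$ covers both $\rho\le1$ and $\rho>1$, which yields \eqref{eq:T-nonexpansive}. The only delicate point is the choice of Young weight, which must kill the $\Delta b$ term exactly; the algebraic simplification $\rho(2-\rho)+(\rho-1)^2=1$ is what makes the stated threshold come out sharply.
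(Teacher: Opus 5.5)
Your proof is correct, and it reaches the same threshold as the paper by a different organization of the same ingredients. Both arguments start from the relaxation identity and the first inequality in \eqref{S_d_ne}, which uses only monotonicity. The paper does not split into cases. It bounds the cross term $2\langle \Delta b+\sigma A\Delta a, A\Delta a\rangle$ by $\frac{\|A\|}{\sqrt{\eta}}\|r\|_H^2$. This turns \eqref{S_d_ne} into the averagedness estimate $\|\Delta S\|_H^2\le\|\Delta w\|_H^2-(1-\|A\|/\sqrt{\eta})\|r\|_H^2$, equivalently the cocoercivity bound $2\langle \Delta w, r\rangle_H\ge(2-\|A\|/\sqrt{\eta})\|r\|_H^2$ for $I-S_d$. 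The standard relaxation expansion then gives $\|T_{d,\rho}w_1-T_{d,\rho}w_2\|_H^2\le\|\Delta w\|_H^2-\rho(2-\rho-\|A\|/\sqrt{\eta})\|r\|_H^2$ for all $\rho\in(0,2)$ at once.

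You instead treat $\rho\le 1$ as a convex combination of $I$ and a nonexpansive map. For $\rho>1$ you minimize the explicit quadratic form in $(\Delta a,\Delta b)$ over $\Delta b$; your Young weight is exactly that minimizer. This shows that $(2-\rho)^2\eta\ge\|A\|^2$ is precisely the condition under which the lower bound coming from \eqref{S_d_ne} is nonnegative.

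What the paper's route buys is a uniform argument and a decrease quantified by the full residual $\|r\|_H^2$, so $T_{d,\rho}$ is actually averaged under the strict hypothesis. Your leftover term only controls $\|\Delta a\|^2$.

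Your case split is also unnecessary. After Young's inequality only $c^2$ enters, so the same computation with $c=\rho-1\le 0$ works verbatim. It gives nonexpansiveness for every $\rho\in(0,2)$ under $\eta\ge\|A\|^2/(2-\rho)^2$, which for $\rho<1$ is weaker than the stated hypothesis. The paper's final inequality likewise only uses $\|A\|/\sqrt{\eta}\le 2-\rho$.
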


\begin{proof}
For $w_i=(x_i,p_i)\in\mathcal W$, $i\in\{1,2\}$, we use the
notation $a_i$ and $b_i$ introduced in the
proof of Lemma~\ref{lem:S-nonexpansive}. Since
$1/\tau=\sigma\eta$, we have
\begin{align}
&2\left|
\left\langle
b_1-b_2+\sigma A(a_1-a_2),
A(a_1-a_2)
\right\rangle
\right| \notag\\
&\quad\leq
\frac{\|A\|}{\sqrt{\eta}}
\left[
\frac{1}{\tau}\|a_1-a_2\|^2
+\frac{1}{\sigma}
\|b_1-b_2+\sigma A(a_1-a_2)\|^2
\right],
\label{eq:basic-lower-bound}
\end{align}
which means that
\begin{align}
&\frac{1}{\tau}\|a_1-a_2\|^2
-\sigma\|A(a_1-a_2)\|^2
+\frac{1}{\sigma}\|b_1-b_2\|^2 \notag\\
&=
\frac{1}{\tau}\|a_1-a_2\|^2 +
\frac{1}{\sigma}
\|b_1-b_2+\sigma A(a_1-a_2)\|^2
 \notag\\
&\quad -2\left\langle
b_1-b_2+\sigma A(a_1-a_2),
A(a_1-a_2)
\right\rangle \notag\\
&\geq
\left(1-\frac{\|A\|}{\sqrt{\eta}}\right)
\left[
\frac{1}{\tau}\|a_1-a_2\|^2
+\frac{1}{\sigma}
\|b_1-b_2+\sigma A(a_1-a_2)\|^2
\right] \notag\\
&=
\left(1-\frac{\|A\|}{\sqrt{\eta}}\right)
\left\|
(I-S_d)w_1-(I-S_d)w_2
\right\|_H^2.
\label{eq:residual-lower-bound}
\end{align}

The estimate established in \eqref{S_d_ne} can be rearranged as
\begin{align}
\|S_d(w_1)-S_d(w_2)\|_H^2
&\leq
\|w_1-w_2\|_H^2
-\frac{1}{\tau}\|a_1-a_2\|^2 \notag\\
&\quad+
\sigma\|A(a_1-a_2)\|^2
-\frac{1}{\sigma}\|b_1-b_2\|^2.
\label{eq:S-strengthened}
\end{align}
Combining \eqref{eq:residual-lower-bound} and
\eqref{eq:S-strengthened}, we obtain
\begin{align}
\|S_d(w_1)-S_d(w_2)\|_H^2
&\leq
\|w_1-w_2\|_H^2 \notag\\
&\quad-
\left(1-\frac{\|A\|}{\sqrt{\eta}}\right)
\left\|
(I-S_d)w_1-(I-S_d)w_2
\right\|_H^2.
\label{eq:Sd-averaged-estimate}
\end{align}

Since
\(
  S_d(w_1)-S_d(w_2)
  =
  w_1-w_2-
  \bigl[(I-S_d)w_1-(I-S_d)w_2\bigr]
\), inequality \eqref{eq:Sd-averaged-estimate} implies
\begin{align}
&2\left\langle
w_1-w_2,
(I-S_d)w_1-(I-S_d)w_2
\right\rangle_H \notag\\
&\quad\geq
\left(2-\frac{\|A\|}{\sqrt{\eta}}\right)
\left\|
(I-S_d)w_1-(I-S_d)w_2
\right\|_H^2.
\label{eq:residual-inner-product}
\end{align}
Moreover, by
\(
T_{d,\rho}(w_1)-T_{d,\rho}(w_2)
=
w_1-w_2-
\rho\bigl[(I-S_d)w_1-(I-S_d)w_2\bigr],
\)
we have
\begin{align}
&\|T_{d,\rho}(w_1)-T_{d,\rho}(w_2)\|_H^2 \notag\\
&=
\|w_1-w_2\|_H^2
-2\rho
\left\langle
w_1-w_2,
(I-S_d)w_1-(I-S_d)w_2
\right\rangle_H \notag\\
&\quad+
\rho^2
\left\|
(I-S_d)w_1-(I-S_d)w_2
\right\|_H^2 \notag\\
&\leq
\|w_1-w_2\|_H^2
-\rho
\left(
2-\rho-\frac{\|A\|}{\sqrt{\eta}}
\right)
\left\|
(I-S_d)w_1-(I-S_d)w_2
\right\|_H^2,
\label{eq:T-nonexpansive-1}
\end{align}
where the inequality follows from \eqref{eq:residual-inner-product}.
The condition on $\eta$ implies that
\(
\|A\|/\sqrt{\eta}
  <
  \min\{1,2-\rho\},
\)
and hence
\(2-\rho-\|A\|/\sqrt{\eta}>0\).
Thus, \eqref{eq:T-nonexpansive} follows from
\eqref{eq:T-nonexpansive-1}.\qed
\end{proof}

\section{Restarted acceleration of pADMM}
\label{sec:algorithm}

In this section, we propose a restarted accelerated proximal
alternating direction method of multipliers, named RA-pADMM,
for finding a lifted stationary point of problem~\eqref{model}. We prove the
convergence of the iterates and derive the convergence
rates for the KKT residual and objective value gap.

\subsection{Restarted accelerated pADMM}

The RA-pADMM algorithm consists of two phases and is detailed
in Algorithm~\ref{alg:main}. In Phase~I, the update $\bar w^j=S_{d^j}(w^j)$,
equivalent to one pADMM step for branch $d^j$, is combined
with the Halpern scheme until an output
$\bar x^j\in\mathcal G_\nu$ is obtained, thereby providing
a preliminary identification of the branch structure. In Phase~II, accelerated iterations are performed on a branch
that remains fixed within each epoch. A restart is triggered
when the hard-thresholded candidate induces a different branch
and yields a sufficient decrease in the original objective function.

To describe the restart mechanism, define the hard-thresholding map
\begin{equation}\label{eq:Z}
  (Z_\nu(x))_i:=
  \begin{cases}
    0, & |x_i|<\nu,\\
    x_i, & |x_i|\geq\nu.
  \end{cases}
\end{equation}
For every $v\in X$, Assumption~\ref{ass2} ensures that
$Z_\nu(v)\in X$. Indeed, if $|v_i|<\nu$, then
$\nu<\omega$ implies $0\in[l_i,u_i]$, where $\omega$ is defined as in \eqref{new}. Consequently,
$Z_\nu(v)\in\mathcal G_\nu$ and $d^{Z_\nu(v)}=d^v$.

\begin{algorithm}[h]
\caption{Restarted Accelerated Proximal ADMM (RA-pADMM)}
\label{alg:main}
\small
\begin{algorithmic}
\State \textbf{Initialization:}
$(x^0,p^0)\in X\times\R^m$,
$\alpha\geq2$, $\rho\in(0,2)$, $\sigma>0$; choose
$\eta>\norm{A}^2/\min\{1,2-\rho\}^2$, set
$\tau=(\sigma\eta)^{-1}$, and choose
$0<\delta<\lambda-\nu L_f\norm{A}$.

\Statex \textbf{Phase I: Branch identification}
\State Set $w^0=(x^0,p^0)$,
$\bar w^{-1}=(\bar x^{-1},\bar p^{-1}):=w^0$, and $j=0$.
\While{true}
    \State Set $d^j=d^{\bar x^{j-1}}$ according to
    \eqref{lmdk-1} and compute
    $\bar w^j=S_{d^j}(w^j)$ from
    \eqref{eq:S-map}.
    \If{$\bar x^j\in\mathcal G_\nu$}
        \State Set
        $w^{0,0}=(x^{0,0},p^{0,0}):=\bar w^j$
        and \textbf{break}.
    \EndIf
    \State Update
    \Statex \hspace*{\dimexpr\linewidth-\textwidth\relax}%
    \begin{minipage}{\textwidth}
    \begin{equation}\label{eq:halpern-update}
      w^{j+1}
      =\frac{1}{j+2}w^0
      +\frac{j+1}{j+2}\bar w^j.
    \end{equation}
    \end{minipage}
    \State Set $j\leftarrow j+1$.
\EndWhile

\Statex \textbf{Phase II: Restarted acceleration}
\State Set $s=k=0$ and initialize
$d^{0,0}=d^{x^{0,0}}$,
$V^0=F(x^{0,0})$, and
$\widehat w^{0,0}=w^{0,0}$.
\While{the stopping condition is not satisfied}
    \State Compute
    $\bar w^{s,k}=S_{d^{s,k}}(w^{s,k})$
    from \eqref{eq:S-map}.
    \State Compute
    $\widetilde x^{s,k}=Z_\nu(\bar x^{s,k})$ and
    $\widetilde d^{s,k}=d^{\widetilde x^{s,k}}$
    according to \eqref{lmdk-1}.
    \If{$\widetilde d^{s,k}\neq d^{s,k}$ and
        $F(\widetilde x^{s,k})\leq V^s-\delta$}
        \State Set
        \(
          w^{s+1,0}
          =(x^{s+1,0},p^{s+1,0})
          :=(\widetilde x^{s,k},\bar p^{s,k}).
        \)
        \State Set $s\leftarrow s+1$,
        $d^{s,0}=d^{x^{s,0}}$,
        $V^s=F(x^{s,0})$,
        $\widehat w^{s,0}=w^{s,0}$, and $k=0$.
    \Else
        \State Update
        \Statex \hspace*{\dimexpr\linewidth-\textwidth\relax}%
        \begin{minipage}{\textwidth}
        \[
          \widehat w^{s,k+1}
          =w^{s,k}
          +\rho(\bar w^{s,k}-w^{s,k}).
        \]
        \end{minipage}
        \State Update
        \Statex \hspace*{\dimexpr\linewidth-\textwidth\relax}%
        \begin{minipage}{\textwidth}
        \begin{equation}\label{fkm}
        \begin{aligned}
          w^{s,k+1}
          =w^{s,k}
          +\frac{\alpha}{2(k+\alpha)}
          \bigl(\widehat w^{s,k+1}-w^{s,k}\bigr)+\frac{k}{k+\alpha}
          \bigl(\widehat w^{s,k+1}
          -\widehat w^{s,k}\bigr).
        \end{aligned}
        \end{equation}
        \end{minipage}
        \State Set $d^{s,k+1}=d^{s,k}$ and
        $k\leftarrow k+1$.
    \EndIf
\EndWhile
\end{algorithmic}
\normalsize
\end{algorithm}

\subsection{Convergence analysis}

Let $\{w^j\}$ and $\{\bar w^j\}$ denote the Phase I iterates generated by
the RA-pADMM algorithm. For each Phase II epoch $s$, let
$\{w^{s,k}\}$, $\{\bar w^{s,k}\}$, and
$\{\widehat w^{s,k}\}$ denote the corresponding epochwise iterates.
In this subsection, we establish the convergence of the algorithm
and derive convergence rates for the objective value gap and the
KKT residual.

\begin{lemma}
\label{finite Phase I}
There exists a $J\in\mathbb N$ such that
$\bar x^J\in\mathcal G_\nu$.
Consequently, Phase~I terminates after finitely many
evaluations of $S_{d^j}$.
\end{lemma}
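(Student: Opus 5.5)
The plan is to argue by contradiction, componentwise, using the closed form \eqref{eq:sparse-update} and the Lipschitz bound on $f$. Everything rests on one a priori estimate. Since $\xi(w)=\prox_{\sigma f^*}(p+\sigma Ax)\in\dom f^*$ and $f$ is $L_f$-Lipschitz (Assumption~\ref{ass1}), we have $\|\xi(w)\|\le L_f$. Hence $|(\tau A^\top\xi)_i|\le \tau L_f\|A\|$ for every $i$, so $|q_i-x_i|\le\tau L_f\|A\|$ in Remark~\ref{rem:x-explicit}. Set $a:=\tau\lambda/\nu$ and $\kappa:=\tau(\lambda/\nu-L_f\|A\|)$. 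Assumption~\ref{ass2} gives $\kappa>0$. Suppose now that Phase~I never terminates, so that $\bar x^j\notin\mathcal G_\nu$ for all $j$.

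\textbf{Step 1: one-step shrinkage on branch $1$.} If $d^j_i=1$, then \eqref{eq:sparse-update} reads $\bar x^j_i=\Pi_{[l_i,u_i]}[\soft(q_i,a)]$. I will check that $0\in[l_i,u_i]$ in this situation: either $0$ is an endpoint, or $\nu<\omega$ as noted after \eqref{eq:Z}. Hence $|\Pi_{[l_i,u_i]}(t)|\le|t|$, and so
\[
|\bar x^j_i|\le (|q_i|-a)_+\le (|x^j_i|-\kappa)_+ .
\]

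\textbf{Step 2: effect of the Halpern anchor.} By \eqref{eq:halpern-update},
\[
x^{j+1}_i=\bar x^j_i+\frac{x^0_i-\bar x^j_i}{j+2}.
\]
Whenever $|\bar x^j_i|<\nu$ this gives $|x^{j+1}_i|\le|\bar x^j_i|+(|x^0_i|+\nu)/(j+2)$. Choose $j_0$ such that $(\max_i|x^0_i|+\nu)/(j+2)\le\kappa/2$ for all $j\ge j_0$.

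Take $j\ge j_0$ with $|\bar x^j_i|<\nu$. Then $d^{j+1}_i=1$ by \eqref{lmdk-1}, and Steps 1--2 give
\[
|\bar x^{j+1}_i|\le\bigl(|\bar x^j_i|-\kappa/2\bigr)_+ .
\]
Two consequences follow. First, the set $\{|\bar x_i|<\nu\}$ is absorbing after $j_0$. Second, on this set the magnitude decreases by at least $\kappa/2$ per step until it reaches $0$, which then persists. So each visit to $(0,\nu)$ lasts at most $\lceil 2\nu/\kappa\rceil$ steps.

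\textbf{Step 3: counting and conclusion.} Fix a component $i$. After $j_0$ it either stays in $\{|\bar x_i|\ge\nu\}$ forever, or enters $[0,\nu)$ once and never leaves, by absorption. In the second case it lies in $(0,\nu)$ for at most $\lceil 2\nu/\kappa\rceil$ further iterations and is $0$ from then on. Hence there is $J\ge j_0$, obtained by taking the maximum over the $n$ components, such that every $\bar x^J_i$ is either $0$ or has modulus $\ge\nu$. Since $\bar x^J\in X$ (it is a projection onto the box), this means $\bar x^J\in\mathcal G_\nu$, which is the desired contradiction. In fact the argument directly exhibits $J$.

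The main obstacle is Step~2. The switching branches $d^j$ prevent using the standard Halpern convergence theory, because the maps $S_{d^j}$ have different fixed-point sets and the iterates need not even be bounded. So I avoid global arguments entirely and rely only on the uniform bound $\|\xi\|\le L_f$, the strict margin $\kappa>0$ from Assumption~\ref{ass2}, and the decay of the anchor weight $1/(j+2)$. Two points need care. The drift term must be controlled using only $|\bar x^j_i|<\nu$, not boundedness of the full iterates. And the box projection must never push a branch-$1$ component away from $0$; this is where $\nu<\omega$ is used.
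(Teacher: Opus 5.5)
Your proof is correct and follows the paper's argument. It uses the same bound $\|\xi^j\|\le L_f$, the same margin $\Delta=\tau(\lambda/\nu-L_f\|A\|)$ (your $\kappa$), the same control of the Halpern anchor drift, and the same contraction $|\bar x_i^{j+1}|\le(|\bar x_i^j|-\Delta/2)_+$ on components with $|\bar x_i^j|<\nu$. The one difference is that you omit the paper's sign-preservation step for branches $2$ and $3$, which needs the extra condition $(M_0+\nu)/(j+1)\le\nu/2$ on $J_0$ (with $M_0=\max_i|x_i^0|$) to stabilize the full vector $d^j$. You are right that this step is unnecessary: membership in $\mathcal G_\nu$ depends only on $|\bar x_i|$, and absorption of the small set already stabilizes the small/large dichotomy componentwise.
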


\begin{proof}
Suppose that Phase~I continues indefinitely, that is,
$\bar x^j\notin\mathcal G_\nu$ for all $j\geq0$.
The Halpern update \eqref{eq:halpern-update} gives
\begin{equation}\label{x_update}
  x^j
  =\frac{1}{j+1}x^0
   +\frac{j}{j+1}\bar x^{j-1},
  \qquad j\geq1.
\end{equation}
Since $x^0\in X$ and every proximal output $\bar x^j$ belongs to $X$,
we have $x^j\in X$ for all $j\geq0$. For each $j$, define $\xi^j$ and $q^j$ as in
Remark~\ref{rem:x-explicit} with $w=w^j$.
By the optimality condition for the
$\bar y^j$-update in \eqref{eq:padmm-explicit},
we have $\xi^j\in\partial f(\bar y^j)$.
By Assumption~\ref{ass1}, $\|\xi^j\|\leq L_f$.
Hence,
\begin{equation}\label{eq:q-x-bound}
  |q_i^j-x_i^j|
  =\tau|(A^\top\xi^j)_i|
  \leq\tau L_f\|A\|,
  \qquad i=1,\ldots,n.
\end{equation}
Set $\Delta:=\tau(\lambda/\nu-L_f\|A\|)>0$ and
$M_0:=\max_{1\leq i\leq n}|x_i^0|$, where the positivity of
$\Delta$ follows from Assumption~\ref{ass2}.
Choose an integer $J_0\geq1$ such that
\[
  \frac{M_0+\nu}{j+1}
  \leq\min\left\{\frac{\Delta}{2},\frac{\nu}{2}\right\},
  \qquad j\geq J_0.
\]

Fix $j\geq J_0$.
If $d_i^j=1$, then $|\bar x_i^{j-1}|<\nu$.
Since $\bar x_i^{j-1}\in[l_i,u_i]$ and $\nu<\omega$,
we have $0\in[l_i,u_i]$.
Moreover, \eqref{x_update} gives
\begin{equation}\label{eq:xk-bound}
  |x_i^j|
  \leq
  \frac{|x_i^0|}{j+1}
  +\frac{j}{j+1}|\bar x_i^{j-1}|
  \leq
  |\bar x_i^{j-1}|+\frac{\Delta}{2}.
\end{equation}
Then, using \eqref{eq:sparse-update}, we obtain
\begin{equation}\label{eq:phase-one-contraction}
\begin{aligned}
  |\bar x_i^j|
  \leq
  \left(|q_i^j|-\frac{\tau\lambda}{\nu}\right)_+
  \leq (|x_i^j|-\Delta)_+
  \leq
  \left(|\bar x_i^{j-1}|-\frac{\Delta}{2}\right)_+
  <\nu,
\end{aligned}
\end{equation}
where the second and third inequalities follow from
\eqref{eq:q-x-bound} and \eqref{eq:xk-bound}, respectively.
Thus, $d_i^{j+1}=1$.

If $d_i^j=2$, then $\bar x_i^{j-1}\geq\nu$, and
\[
  x_i^j
  \geq
  -\frac{M_0}{j+1}+\frac{j\nu}{j+1}
  =\nu-\frac{M_0+\nu}{j+1}
  \geq\frac{\nu}{2}.
\]
Consequently, we have
\[
  q_i^j
  \geq x_i^j-\tau L_f\|A\|
  \geq\frac{\nu}{2}-\tau L_f\|A\|
  >-\frac{2\tau\lambda}{\nu}.
\]
Since $u_i\geq\bar x_i^{j-1}\geq\nu$,
\eqref{eq:sparse-update} yields $\bar x_i^j\geq0$,
and hence $d_i^{j+1}\in\{1,2\}$.
Similarly, $d_i^j=3$ implies
$d_i^{j+1}\in\{1,3\}$. Since the number of coordinates is
finite, there exist $J_d\geq J_0$ and
$\widehat d\in\mathcal D$ such that
$d^j=\widehat d$ for all $j\geq J_d$.

For every coordinate with $\widehat d_i=1$, repeated application of
\eqref{eq:phase-one-contraction} yields
\[
  |\bar x_i^{J_d+r}|
  \leq
  \left(
    |\bar x_i^{J_d-1}|
    -\frac{(r+1)\Delta}{2}
  \right)_+,
  \qquad r\geq0.
\]
Since $|\bar x_i^{J_d-1}|<\nu$, it follows that
$\bar x_i^{J_d+r}=0$ whenever
$r+1\geq\lceil 2\nu/\Delta\rceil$.
Hence, these coordinates become zero after at most
$\lceil 2\nu/\Delta\rceil$ iterations starting from $J_d$
and remain zero thereafter.
For the remaining coordinates with $\widehat d_i\in\{2,3\}$,
branch stabilization yields $|\bar x_i^j|\geq\nu$
for all $j\geq J_d$.
Thus, with $J:=J_d+\lceil 2\nu/\Delta\rceil$, we have
$\bar x^J\in\mathcal G_\nu$, contradicting the
assumption that Phase~I continues indefinitely.
Hence, Phase~I terminates after finitely many evaluations
of $S_{d^j}$.
\qed
\end{proof}

Once Phase~I terminates, all subsequent branch changes can occur only
through accepted restarts in Phase~II. The next lemma shows that the
number of such restarts is finite.

\begin{lemma}
\label{lem:finite-restarts}
Let $R_{\mathrm{res}}$ denote the number of accepted restarts in
Phase~II. Then
\begin{equation}\label{eq:restart-bound}
  R_{\mathrm{res}}
  \leq
  \left\lceil
  \frac{V^0-\mathcal F_{\inf}}{\delta}
  \right\rceil.
\end{equation}
Hence, only finitely many restarts can occur in Phase~II, and there exists a final epoch indexed by $\bar s$ with fixed branch $\bar d=d^{\bar s,0}$.
\end{lemma}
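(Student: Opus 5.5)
The plan is a telescoping argument on the reference values $V^s$. Each accepted restart lowers $V^s$ by at least $\delta$, while every $V^s$ is bounded below by $\mathcal F_{\inf}$; these two facts together cap the number of restarts.

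First, I would check that every restart point is feasible, so that the original objective agrees with the DC objective there. By Lemma~\ref{finite Phase I}, $x^{0,0}=\bar x^J$ is a proximal output of $\tau g_{d^J}$, hence $x^{0,0}\in X$. Every later restart point has the form $x^{s+1,0}=\widetilde x^{s,k}=Z_\nu(\bar x^{s,k})$. Here $\bar x^{s,k}\in X$, again as a proximal output of $\tau g_{d^{s,k}}$. The discussion after \eqref{eq:Z} then gives $Z_\nu(\bar x^{s,k})\in X$, using $\nu<\omega$ from Assumption~\ref{ass2}. For $x\in X$ we have $F(x)=\mathcal F(x)$, by the equivalence of \eqref{model} and \eqref{eq:dc-decomposition}. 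Therefore
\[
V^s=F(x^{s,0})=\mathcal F(x^{s,0})\geq\mathcal F_{\inf}
\]
for every epoch $s$, where $\mathcal F_{\inf}>-\infty$ by Remark~\ref{rem:branch-solvability}.

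Second, I would use the acceptance test directly. A restart from epoch $s$ to epoch $s+1$ is accepted only if $F(\widetilde x^{s,k})\leq V^s-\delta$, and the algorithm then sets $V^{s+1}=F(x^{s+1,0})=F(\widetilde x^{s,k})$. Hence $V^{s+1}\leq V^s-\delta$.

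Third, I would telescope. Suppose that $R$ restarts are accepted. The argument must also cover a possibly infinite number of restarts, so I would assume at least $R$ restarts occur for an arbitrary finite $R$. Telescoping gives
\[
\mathcal F_{\inf}\leq V^R\leq V^0-R\delta,
\]
so $R\leq (V^0-\mathcal F_{\inf})/\delta$, and therefore $R\leq\lceil (V^0-\mathcal F_{\inf})/\delta\rceil$. This rules out infinitely many restarts and yields \eqref{eq:restart-bound}.

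Consequently the epoch counter $s$ is eventually constant; call its final value $\bar s$. Within an epoch, branches are updated only by the rule $d^{s,k+1}=d^{s,k}$, so the branch in the final epoch stays equal to $\bar d=d^{\bar s,0}$. There is no real obstacle here. The only point needing care is the first step, verifying $x^{s,0}\in X$, so that $V^s$ is a value of $\mathcal F$ and the uniform lower bound $\mathcal F_{\inf}$ applies.
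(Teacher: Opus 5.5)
Your proposal is correct and follows the paper's argument: the paper likewise combines $\mathcal F_{\inf}\le V^r\le V^0-r\delta$, valid for any number $r$ of accepted restarts, with $\mathcal F_{\inf}>-\infty$ from Remark~\ref{rem:branch-solvability}. You also check explicitly that every $x^{s,0}$ lies in $X$, so that $V^s=F(x^{s,0})=\mathcal F(x^{s,0})\ge\mathcal F_{\inf}$; the paper leaves this step implicit, and you handle it correctly.
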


\begin{proof}
By Remark~\ref{rem:branch-solvability},
$\mathcal F_{\inf}>-\infty$.
For any $r\in\mathbb N$ such that $r$ restarts have been
accepted, the definition of $V^r$ and the restart criterion give
\(
  \mathcal F_{\inf}\leq V^r\leq V^0-r\delta.
\)
Thus, every such $r$ satisfies
\[
  r\leq \left\lceil\frac{V^0-\mathcal F_{\inf}}{\delta} \right\rceil,
\]
so only finitely many restarts can be accepted, and
\eqref{eq:restart-bound} follows. Since $R_{\mathrm{res}}$ is finite, there exists a final epoch
indexed by $\bar s$. The branch remains unchanged within each
epoch, so the final epoch has fixed branch
$\bar d=d^{\bar s,0}$.
\qed
\end{proof}

By Lemmas~\ref{finite Phase I} and~\ref{lem:finite-restarts},
the RA-pADMM algorithm reaches the final epoch indexed by $\bar s$
after finitely many Phase~I iterations and accepted restarts in
Phase~II.
Since the branch remains unchanged within this epoch,
$d^{\bar s,k}=\bar d$ for every $k\geq0$.
Consequently, all iterations in the final epoch are generated by
the fixed branch map $S_{\bar d}$. Let
\[
  T_{\bar d,\rho}:=I+\rho(S_{\bar d}-I).
\]
As observed in
\cite[Remark~2.6]{SunYuanZhangZhao2025}, when
$\alpha=2$, the iteration in the final epoch can be
written as the Halpern iteration
\begin{equation}\label{eq:final-halpern}
  w^{\bar s,k+1}
  =
  \frac{1}{k+2}w^{\bar s,0}
  +
  \frac{k+1}{k+2}
  T_{\bar d,\rho}(w^{\bar s,k}).
\end{equation}
When $\alpha>2$, the final epoch iterates follow the unit-stepsize
fast KM scheme applied to $T_{\bar d,\rho}$.

Thus, after the last restart, the algorithm reduces to a Halpern or
fast KM iteration applied to the fixed nonexpansive mapping
$T_{\bar d,\rho}$, whose fixed-point set satisfies
$\Fix T_{\bar d,\rho}=\Fix S_{\bar d}\neq\varnothing$.
The resulting convergence properties are summarized in the following
proposition.

\begin{proposition}
\label{prop:final-epoch-rate}
Let $\bar s$ and $\bar d$ be as in
Lemma~\ref{lem:finite-restarts}. Then there exists $w^*\in\Fix T_{\bar d,\rho}
=\Fix S_{\bar d}$ such that
\begin{equation}\label{eq:final-output-convergence}
  w^{\bar s,k}\to w^*,
  \qquad
  \bar w^{\bar s,k}\to w^*.
\end{equation}
Moreover, the fixed-point residual satisfies the following
convergence rates:
\begin{equation}\label{eq:final-residual-rates}
  \|\bar w^{\bar s,k}-w^{\bar s,k}\|_H
  =
  \begin{cases}
    O(k^{-1}), & \alpha=2,\\[1mm]
    o(k^{-1}), & \alpha>2.
  \end{cases}
\end{equation}
\end{proposition}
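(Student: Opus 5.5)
Once the last restart has occurred, the final epoch is a classical accelerated fixed-point iteration for the single map $T:=T_{\bar d,\rho}$. This map is nonexpansive in $\|\cdot\|_H$ by Lemma~\ref{T-nonexpansive} and satisfies $\Fix T=\Fix S_{\bar d}\neq\varnothing$. So I will not build new machinery. I will transfer known results for the Halpern and fast KM iterations in the Hilbert space $(\mathcal W,\langle\cdot,\cdot\rangle_H)$ and then translate between the residuals of $T$ and of $S_{\bar d}$. The key identity is $T w-w=\rho(S_{\bar d}w-w)$, and in the final epoch $\bar w^{\bar s,k}=S_{\bar d}(w^{\bar s,k})$. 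Hence
\[
\|\bar w^{\bar s,k}-w^{\bar s,k}\|_H=\rho^{-1}\|T w^{\bar s,k}-w^{\bar s,k}\|_H .
\]
Any rate for the $T$-residual therefore transfers directly to \eqref{eq:final-residual-rates}.

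\textbf{Case $\alpha=2$.} I use the representation \eqref{eq:final-halpern}, which is a Halpern iteration with anchor $w^{\bar s,0}$ and coefficients $1/(k+2)$. Lieder's tight estimate~\cite{Lieder2021} gives, for any $w^\star\in\Fix T$,
\[
\|w^{\bar s,k}-Tw^{\bar s,k}\|_H\leq \frac{2\|w^{\bar s,0}-w^\star\|_H}{k+1}.
\]
This yields the $O(k^{-1})$ rate. For convergence of the iterates, I invoke the classical strong convergence of Halpern's iteration with $\beta_k=1/(k+2)$. In a finite-dimensional (indeed any Hilbert) space, this holds by Wittmann's theorem, whose conditions $\beta_k\to0$, $\sum\beta_k=\infty$ and $\sum|\beta_{k+1}-\beta_k|<\infty$ are all satisfied. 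The limit is $w^*=\Pi^H_{\Fix T}(w^{\bar s,0})$.

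\textbf{Case $\alpha>2$.} I rewrite \eqref{fkm} with $\widehat w^{s,k+1}=Tw^{s,k}$. This is exactly the fast KM scheme of Bo\c{t} and Nguyen~\cite{BotNguyen2023} with unit stepsize, as noted in \cite[Remark~2.6]{SunYuanZhangZhao2025}. Their main theorem gives $\|w^{\bar s,k}-Tw^{\bar s,k}\|_H=o(k^{-1})$ together with weak convergence of $w^{\bar s,k}$ to a point of $\Fix T$. Since $\mathcal W$ is finite-dimensional, weak convergence is convergence.

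\textbf{Convergence of $\bar w^{\bar s,k}$.} In both cases I write $\bar w^{\bar s,k}=w^{\bar s,k}+(\bar w^{\bar s,k}-w^{\bar s,k})$. The second term tends to $0$ by the rates above, so $\bar w^{\bar s,k}\to w^*$ as well.

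The main obstacle is checking that the final epoch really matches the cited schemes' hypotheses. First, for $\alpha>2$ the initialization $\widehat w^{\bar s,0}=w^{\bar s,0}$ and the $k=0$ step must match the fast KM initialization of~\cite{BotNguyen2023}. Second, for $\alpha=2$ I need the algebraic identity \eqref{eq:final-halpern}, which I would verify by induction on $k$. Third, the cited results must be applied in the $H$-inner product rather than the Euclidean one. This is harmless, since they hold in any Hilbert space and $T$ is $H$-nonexpansive.
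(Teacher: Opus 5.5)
Your proposal is correct and follows the paper's proof. In both, the final epoch is identified as a Halpern iteration ($\alpha=2$) or a unit-stepsize fast KM iteration ($\alpha>2$) for the $H$-nonexpansive map $T_{\bar d,\rho}$; Wittmann/Lieder (resp.\ Bo\c{t}--Nguyen) give iterate convergence and residual rates, and the identity $I-T_{\bar d,\rho}=\rho(I-S_{\bar d})$ transfers these to \eqref{eq:final-residual-rates} and to $\bar w^{\bar s,k}\to w^*$. Your additional checks (the induction giving \eqref{eq:final-halpern}, the $k=0$ initialization of fast KM, and applying the cited results in the $H$-inner product) correctly supply details that the paper delegates to \cite[Remark~2.6]{SunYuanZhangZhao2025}.
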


\begin{proof}
If $\alpha=2$, \eqref{eq:final-halpern} is the
Halpern iteration applied to $T_{\bar d,\rho}$.
By Lemma~\ref{T-nonexpansive}, $T_{\bar d,\rho}$ is nonexpansive.
Moreover, Assumption~\ref{ass3} ensures that
$\Fix T_{\bar d,\rho}\neq\varnothing$.
Then, by \cite[Theorem~2]{Wittmann1992},
$w^{\bar s,k}\to w^*\in\Fix T_{\bar d,\rho}$, while
\cite[Theorem~2.1]{Lieder2021} gives
$\|w^{\bar s,k}-T_{\bar d,\rho}w^{\bar s,k}\|_H=O(k^{-1})$.
Since $I-T_{\bar d,\rho}=\rho(I-S_{\bar d})$, we have
\(
  \|\bar w^{\bar s,k}-w^{\bar s,k}\|_H=O(k^{-1})
\),
which together with $w^{\bar s,k}\to w^*$ implies that
$\bar w^{\bar s,k}\to w^*$.

If $\alpha>2$, the final epoch is the unit-stepsize fast KM
iteration applied to $T_{\bar d,\rho}$. By
\cite[Theorems~3.4 and~3.5]{BotNguyen2023}, we obtain
$w^{\bar s,k}\to w^*\in\Fix T_{\bar d,\rho}$ and
$\|w^{\bar s,k}-T_{\bar d,\rho}w^{\bar s,k}\|_H=o(k^{-1})$.
As in the case $\alpha=2$, since
$I-T_{\bar d,\rho}=\rho(I-S_{\bar d})$, it follows that
$\|\bar w^{\bar s,k}-w^{\bar s,k}\|_H=o(k^{-1})$, and hence
$\bar w^{\bar s,k}\to w^*$.
\qed
\end{proof}

Proposition~\ref{prop:final-epoch-rate} establishes convergence on the
fixed final branch. Its connection with the original capped-$\ell_1$
problem is established by the eventual branch consistency condition
$d^{\bar x^{\bar s,k}}=\bar d$.

\begin{lemma}
\label{lem:eventual-branch-consistency}
Let $\bar s$ and $\bar d$ be as defined in Lemma~\ref{lem:finite-restarts}. There exists a finite index $K_1\geq1$ such that
\begin{equation}\label{eq:eventual-branch-consistency}
  d^{\bar x^{\bar s,k}}=\bar d,
  \qquad k\geq K_1.
\end{equation}
\end{lemma}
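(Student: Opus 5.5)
The plan is to argue by contradiction through the restart test. In the final epoch no restart is ever accepted. So for every $k$, either $\widetilde d^{\bar s,k}=\bar d$, or $F(\widetilde x^{\bar s,k})>V^{\bar s}-\delta$. Since $d^{Z_\nu(v)}=d^v$ for $v\in X$, the first alternative is exactly $d^{\bar x^{\bar s,k}}=\bar d$. It therefore suffices to show that, for all large $k$, the inequality $d^{\bar x^{\bar s,k}}\neq\bar d$ forces $F(\widetilde x^{\bar s,k})\le V^{\bar s}-\delta$. Throughout I use the limit $w^*=(x^*,p^*)\in\Fix S_{\bar d}$ from Proposition~\ref{prop:final-epoch-rate}, together with $\bar w^{\bar s,k}\to w^*$ and $w^{\bar s,k}\to w^*$.

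\emph{Step 1 (coordinates with $\bar d_i=1$ are eventually zero).}
The starting point $x^{\bar s,0}$ lies in $\mathcal G_\nu$, being either the Phase~I output or a hard-thresholded point, and $\bar d=d^{x^{\bar s,0}}$. Hence $\bar d_i=1$ gives $x^{\bar s,0}_i=0$, so $0\in[l_i,u_i]$.
\begin{itemize}
\item The map $w\mapsto\xi(w)$ is continuous, since it is a prox. Thus $q^k:=x^{\bar s,k}-\tau A^\top\xi(w^{\bar s,k})\to q^*:=x^*-\tau A^\top\xi(w^*)$.
\item Since $\xi(w)\in\partial f(\bar y)$, Assumption~\ref{ass1} gives $\|\xi\|\le L_f$.
\item The closed form \eqref{eq:sparse-update} at the fixed point gives $|x_i^*|\le(|x_i^*|-\Delta)_+$ with $\Delta=\tau(\lambda/\nu-L_f\|A\|)>0$. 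This forces $x_i^*=0$, and hence $|q_i^*|\le\tau L_f\|A\|<\tau\lambda/\nu$.
\item By convergence, $|q_i^k|<\tau\lambda/\nu$ for $k$ large. Then \eqref{eq:sparse-update} gives $\bar x_i^{\bar s,k}=0$.
\end{itemize}
Thus there is $K_0$ such that $\bar x^{\bar s,k}_i=0$ for all $k\ge K_0$ and all $i$ with $\bar d_i=1$. On these coordinates the branch is always consistent.

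\emph{Step 2 (a uniform decrease when the branch is inconsistent).}
Fix $k\ge K_0$ with $d^{\bar x^{\bar s,k}}\neq\bar d$, write $v=\bar x^{\bar s,k}$ and $\widetilde x=Z_\nu(v)$. I will show
\[
F(\widetilde x)\le F_{\bar d}(v)-(\lambda-\nu L_f\|A\|).
\]
Only coordinates with $\bar d_i\in\{2,3\}$ can be inconsistent; take $\bar d_i=2$, the case $\bar d_i=3$ being symmetric.
\begin{itemize}
\item The branch term $\Phi^{\bar d}_i(v_i)=(|v_i|-v_i)/\nu+1$ is always at least $1$.
\item If $0\le v_i<\nu$, then $\widetilde x_i=0$. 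The penalty drops by $\lambda$ and the coordinate moves by $|v_i|<\nu$.
\item If $-\nu<v_i<0$, then again $\widetilde x_i=0$. The penalty drops by at least $\lambda$ and the move is less than $\nu$.
\item If $v_i\le-\nu$, then $\widetilde x_i=v_i$ is unchanged, while the penalty drops by $2|v_i|/\nu\cdot\lambda\ge 2\lambda$.
\end{itemize}
Let $r\ge1$ be the number of zeroed coordinates; every other coordinate is left unchanged by $Z_\nu$. By Lipschitz continuity, $|f(A\widetilde x)-f(Av)|\le L_f\|A\|\nu\sqrt r$. The total decrease is at least $r\lambda-L_f\|A\|\nu\sqrt r\ge\lambda-\nu L_f\|A\|$. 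If $r=0$, there is some coordinate with $v_i\le-\nu$ and the decrease is at least $2\lambda$. Note that $\lambda-\nu L_f\|A\|>\delta$ by the choice of $\delta$.

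\emph{Step 3 (comparison with $V^{\bar s}$ and conclusion).}
First I compare $F_{\bar d}(x^*)$ with $V^{\bar s}$.
\begin{itemize}
\item By Proposition~\ref{prop:padmm-fixed-point}, $x^*$ satisfies the optimality condition of the convex problem $\min\mathcal F_{\bar d}$, so it is a global minimizer.
\item Since $\bar d\in\mathcal D^{x^{\bar s,0}}$, we have $V^{\bar s}=F(x^{\bar s,0})=F_{\bar d}(x^{\bar s,0})\ge F_{\bar d}(x^*)$.
\end{itemize}
The function $F_{\bar d}$ is continuous on $X$, and $\bar x^{\bar s,k}\to x^*$ within $X$. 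Hence, for $k\ge K_1\ge K_0$,
\[
F_{\bar d}(\bar x^{\bar s,k})\le V^{\bar s}+(\lambda-\nu L_f\|A\|-\delta).
\]
Combining this with Step 2, any $k\ge K_1$ with inconsistent branch yields $F(\widetilde x^{\bar s,k})\le V^{\bar s}-\delta$ and $\widetilde d^{\bar s,k}\neq\bar d$. That would trigger a restart, contradicting the finality of epoch $\bar s$. This proves \eqref{eq:eventual-branch-consistency}.

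The main obstacle is Step 2. A naive comparison against $F(x^*)$ fails, because $x^*_i$ may lie in $(0,\nu)$ with $\bar d_i=2$, so there is no stationarity contradiction. The key observation is that the branch penalty $\Phi^{\bar d}_i\ge1$ on such coordinates, so hard thresholding saves a full $\lambda$ per zeroed coordinate against $F_{\bar d}$, not against $F$. The careful $\sqrt r$ bookkeeping of the $f$-change is also needed. Step 1 is required separately, because for $\bar d_i=1$ no such saving is available.
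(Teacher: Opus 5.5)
Your proof is correct and follows essentially the same route as the paper: force the $\bar d_i=1$ coordinates to zero, show that hard thresholding gains at least $\lambda-\nu L_f\|A\|$ against $F_{\bar d}$ at any branch-inconsistent output, and combine this with $F_{\bar d}(\bar x^{\bar s,k})\to F_{\bar d}(x^*)\le V^{\bar s}$ to trigger a restart in the final epoch, which is a contradiction. The differences are cosmetic. First, the paper gets $\bar x_i^{\bar s,k}=0$ for all $k\ge0$ by direct induction from $x_i^{\bar s,0}=0$, using that zeros are preserved by the soft-thresholding step and by the extrapolation, rather than by passing to the limit. Second, the paper uses the cruder bound $\|\widetilde x-v\|\le\|\widetilde x-v\|_1\le\nu r$, which already yields a decrease of at least $r(\lambda-\nu L_f\|A\|)\ge\lambda-\nu L_f\|A\|$, so your $\sqrt r$ refinement is not actually needed.
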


\begin{proof}
By the definition of the final epoch and
$\bar d=d^{\bar s,0}=d^{x^{\bar s,0}}$, we have
$V^{\bar s}=F(x^{\bar s,0})=F_{\bar d}(x^{\bar s,0})$.
By Proposition~\ref{prop:final-epoch-rate},
$\bar x^{\bar s,k}\to x^*$, where $x^*$ minimizes
$F_{\bar d}$ over $X$. Since every proximal output
$\bar x^{\bar s,k}$ belongs to $X$ and $F_{\bar d}$ is
continuous on $X$, we have
\[
  F_{\bar d}(\bar x^{\bar s,k})
  \to F_{\bar d}(x^*)
  =\min_{x\in X}F_{\bar d}(x)
  \leq V^{\bar s}.
\]
Thus, there exists a sufficiently large $K_1\geq 1$ such that,
for all $k\geq K_1$, we have
\begin{equation}\label{eq:final-value-bound}
  F_{\bar d}(\bar x^{\bar s,k})
  \leq
  V^{\bar s}+\lambda-\nu L_f\|A\|-\delta.
\end{equation}

We first show that $\bar d_i=1$ implies
$\bar x_i^{\bar s,k}=0$ for all $k\geq0$. Since the initial point of the final epoch belongs to
$\mathcal G_\nu$, $\bar d_i=1$ implies $x_i^{\bar s,0}=0$. Define $\xi^{\bar s,0}$ and $q^{\bar s,0}$ as in
Remark~\ref{rem:x-explicit} with $w=w^{\bar s,0}$.
By an argument similar to that used to derive
\eqref{eq:q-x-bound}, we have, for every $i$ with
$\bar d_i=1$,
$|q_i^{\bar s,0}|
=\tau|[A^\top\xi^{\bar s,0}]_i|
\leq\tau L_f\|A\|
<\tau\lambda/\nu$.
It follows from \eqref{eq:sparse-update} that
$\bar x_i^{\bar s,0}=0$, and hence
$\widehat x_i^{\bar s,1}=0$.
Since $\widehat x_i^{\bar s,0}
=x_i^{\bar s,0}=0$,
\eqref{fkm} gives $x_i^{\bar s,1}=0$.
Repeating this argument inductively yields
$x_i^{\bar s,k}
=\bar x_i^{\bar s,k}
=\widehat x_i^{\bar s,k}=0$
for all $k\geq0$.

Suppose, to the contrary, that there exists some $K_2\geq K_1$
such that $d^{\bar x^{\bar s,K_2}}\neq\bar d$.
Let
$\widetilde x^{\bar s,K_2}=Z_\nu(\bar x^{\bar s,K_2})$.
Since $\bar d_i=1$ implies
$\bar x_i^{\bar s,K_2}=0$,
any branch inconsistency must occur at an index
$i$ with $\bar d_i\in\{2,3\}$.
We define the following two index sets
to characterize these inconsistencies:
\[
\begin{aligned}
  I_0^{\bar s,K_2}
  &:=
  \left\{
  i:\bar d_i\in\{2,3\},\
  |\bar x_i^{\bar s,K_2}|<\nu
  \right\},\\
  I_{\mathrm{flip}}^{\bar s,K_2}
  &:=
  \left\{
  i:
  \bar d_i=2,\ \bar x_i^{\bar s,K_2}\leq-\nu
  \ \text{or}\
  \bar d_i=3,\ \bar x_i^{\bar s,K_2}\geq\nu
  \right\}.
\end{aligned}
\]
Then $d^{\bar x^{\bar s,K_2}}\neq\bar d$ implies
$I_0^{\bar s,K_2}\cup
I_{\mathrm{flip}}^{\bar s,K_2}\neq\varnothing$.
For each $i$, set
\[
  \gamma_i:=
  \lambda\left(
  \frac{|\bar x_i^{\bar s,K_2}|}{\nu}
  -\theta_{\bar d_i}(\bar x_i^{\bar s,K_2})
  -\min\left\{
  \frac{|\widetilde x_i^{\bar s,K_2}|}{\nu},1
  \right\}
  \right).
\]
We have $\gamma_i\geq\lambda$ for
$i\in I_0^{\bar s,K_2}$,
$\gamma_i\geq2\lambda$ for
$i\in I_{\mathrm{flip}}^{\bar s,K_2}$, and
$\gamma_i\geq0$ for all remaining indices. Hence,
\begin{equation}\label{eq:final-value-difference_1}
\begin{aligned}
  &\lambda\Phi^{\bar d}(\bar x^{\bar s,K_2})
  -\lambda\Phi(\widetilde x^{\bar s,K_2})\\
  &\quad=
  \sum_{i=1}^n\gamma_i
  \geq
  \lambda|I_0^{\bar s,K_2}|
  +2\lambda|I_{\mathrm{flip}}^{\bar s,K_2}|
  \geq\lambda,
\end{aligned}
\end{equation}
where $|I|$ denotes the cardinality of a set $I$, and
the last inequality follows from
$I_0^{\bar s,K_2}\cup
I_{\mathrm{flip}}^{\bar s,K_2}\neq\varnothing$.
Moreover, since $Z_\nu$ changes $\bar x^{\bar s,K_2}$ only at indices in $I_0^{\bar s,K_2}$, we have
$\|\widetilde x^{\bar s,K_2}-\bar x^{\bar s,K_2}\|_1
=\sum_{i\in I_0^{\bar s,K_2}}|\bar x_i^{\bar s,K_2}|
\leq\nu|I_0^{\bar s,K_2}|$.
Therefore, we obtain
\begin{equation}\label{eq:final-value-difference_2}
\begin{aligned}
  f(A\widetilde x^{\bar s,K_2})
  -f(A\bar x^{\bar s,K_2})
  &\leq
  L_f\|A\|
  \|\widetilde x^{\bar s,K_2}
  -\bar x^{\bar s,K_2}\|\\
  &\leq
  L_f\|A\|
  \|\widetilde x^{\bar s,K_2}
  -\bar x^{\bar s,K_2}\|_1\\
  &\leq
  \nu L_f\|A\||I_0^{\bar s,K_2}|.
\end{aligned}
\end{equation}
Combining \eqref{eq:final-value-difference_1} and
\eqref{eq:final-value-difference_2}, we have
\[
\begin{aligned}
F_{\bar d}(\bar x^{\bar s,K_2})
-F(\widetilde x^{\bar s,K_2})
&\geq
(\lambda-\nu L_f\|A\|)
|I_0^{\bar s,K_2}|+
2\lambda|I_{\mathrm{flip}}^{\bar s,K_2}|\\
&\geq \lambda-\nu L_f\|A\|.
\end{aligned}
\]
Together with \eqref{eq:final-value-bound}, this yields
$F(\widetilde x^{\bar s,K_2})\leq V^{\bar s}-\delta$.
Since
$d^{\widetilde x^{\bar s,K_2}}
=d^{\bar x^{\bar s,K_2}}
\neq\bar d=d^{\bar s,K_2}$, the restart criterion is
satisfied, contradicting the definition of the final epoch.
Hence, no such $K_2\geq K_1$ satisfying
$d^{\bar x^{\bar s,K_2}}\neq\bar d$ exists; that is,
$d^{\bar x^{\bar s,k}}=\bar d$ for all $k\geq K_1$.
\qed
\end{proof}

For the final branch $\bar d$ and an input $w=(x,p)$,
let $(\bar y,\bar x,\bar p)$ be generated by
\eqref{eq:padmm-explicit}. Then
$(\bar x,\bar p)=S_{\bar d}(w)$ by
Proposition~\ref{prop:padmm-fixed-point}.
Define the associated KKT residual by
\begin{equation}\label{eq:kkt-residual}
  \mathcal E_{\bar d}(w):=
  \dist\bigl(0,\partial g_{\bar d}(\bar x)+A^\top\bar p\bigr)
  +\dist\bigl(\bar p,\partial f(\bar y)\bigr)
  +\norm{A\bar x-\bar y}.
\end{equation}
The following theorem establishes the convergence rates of the
KKT residual and the objective value.

\begin{theorem}
\label{thm:convergence-rates}
Let $\bar s$ and $\bar d$ be as in
Lemma~\ref{lem:finite-restarts}, and let $K_1$ be as in
Lemma~\ref{lem:eventual-branch-consistency}. Then the sequences
$\{x^{\bar s,k}\}$ and $\{\bar x^{\bar s,k}\}$ converge to a
common lifted stationary point $x^*$ of problem~\eqref{model}.
Moreover,
\begin{equation}\label{eq:kkt-objective-rates}
\begin{aligned}
  \mathcal E_{\bar d}(w^{\bar s,k})
  &=
  \begin{cases}
    O(k^{-1}), & \alpha=2,\\
    o(k^{-1}), & \alpha>2,
  \end{cases}
  && k\geq1,\\[0.6ex]
  F(\bar x^{\bar s,k})-F(x^*)
  &=
  \begin{cases}
    O(k^{-1}), & \alpha=2,\\
    o(k^{-1}), & \alpha>2,
  \end{cases}
  && k\geq K_1.
\end{aligned}
\end{equation}
\end{theorem}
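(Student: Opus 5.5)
We first establish convergence to a lifted stationary point, then bound the KKT residual, then the objective gap.

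\textbf{Convergence to a lifted stationary point.} By Proposition~\ref{prop:final-epoch-rate}, $w^{\bar s,k}\to w^*=(x^*,p^*)\in\Fix S_{\bar d}$ and $\bar w^{\bar s,k}\to w^*$. Hence $x^{\bar s,k}\to x^*$ and $\bar x^{\bar s,k}\to x^*$. By Proposition~\ref{prop:padmm-fixed-point}, the point $(x^*,p^*)$ satisfies \eqref{eq:lifted-kkt} with $d^*=\bar d$. By Remark~\ref{eq_lifed}, it then suffices to show $\bar d\in\mathcal D^{x^*}$. Lemma~\ref{lem:eventual-branch-consistency} gives $d^{\bar x^{\bar s,k}}=\bar d$ for $k\ge K_1$, so $\mathcal D^{\bar x^{\bar s,k}}\ni \bar d$. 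Each condition $d_i\in\arg\max_r\theta_r(x_i)$ is a closed condition, namely $\theta_{d_i}(x_i)\ge\theta_r(x_i)$ for all $r$ with $\theta_r$ continuous. Passing to the limit therefore gives $\bar d\in\mathcal D^{x^*}$, so $x^*$ is a lifted stationary point.

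\textbf{KKT residual.} We use \eqref{eq:padmm-explicit} with $w=w^{\bar s,k}$.
\begin{itemize}
\item From the $\bar y$-optimality, $\xi\in\partial f(\bar y)$. Hence $\dist(\bar p,\partial f(\bar y))\le\|\bar p-\xi\|=\sigma\|A(\bar x-x)\|$.
\item From the $\bar x$-prox, $\tau^{-1}(x-\bar x)-A^\top\xi\in\partial g_{\bar d}(\bar x)$. Adding $A^\top\bar p$ shows that $\dist(0,\partial g_{\bar d}(\bar x)+A^\top\bar p)$ is at most $\|\tau^{-1}(x-\bar x)+\sigma A^\top A(\bar x-x)\|\le(\tau^{-1}+\sigma\|A\|^2)\|\bar x-x\|$.
\item For the feasibility term, $A\bar x-\bar y=\sigma^{-1}(\bar p-p)-A(x-\bar x)$. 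This step requires care: $\bar y$ relates to $\xi$, not to $\bar p$. We use $\xi=p+\sigma(Ax-\bar y)$, so $A\bar x-\bar y=A(\bar x-x)+\sigma^{-1}(\xi-p)$ and $\xi-p=(\bar p-p)-\sigma A(\bar x-x)$. Hence $\|A\bar x-\bar y\|\le \sigma^{-1}\|\bar p-p\|+2\|A\|\|\bar x-x\|$.
\end{itemize}
Summing, $\mathcal E_{\bar d}(w^{\bar s,k})\le C\|\bar w^{\bar s,k}-w^{\bar s,k}\|_H$ for a constant $C$ depending on $\sigma,\tau,\|A\|$. The rates in \eqref{eq:final-residual-rates} then transfer directly.

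\textbf{Objective gap.} This is the main obstacle, since $F\neq F_{\bar d}$ in general. For $k\ge K_1$, $\bar d\in\mathcal D^{\bar x^{\bar s,k}}$, so $F(\bar x^{\bar s,k})=F_{\bar d}(\bar x^{\bar s,k})$; likewise $F(x^*)=F_{\bar d}(x^*)$. It therefore suffices to bound $\mathcal F_{\bar d}(\bar x)-\mathcal F_{\bar d}(x^*)$ with $\bar x=\bar x^{\bar s,k}\in X$.

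Convexity gives an upper bound. With $u:=\tau^{-1}(x-\bar x)-A^\top\xi\in\partial g_{\bar d}(\bar x)$ and $\zeta\in\partial f(A\bar x)$,
\[
\mathcal F_{\bar d}(\bar x)-\mathcal F_{\bar d}(x^*)\le\ip{u+A^\top\zeta}{\bar x-x^*}.
\]
To make $\zeta$ close to $\xi$, avoid evaluating $\partial f$ at $A\bar x$. Instead write $f(A\bar x)\le f(\bar y)+L_f\|A\bar x-\bar y\|$ and $f(\bar y)-f(Ax^*)\le\ip{\xi}{\bar y-Ax^*}$. This gives
\[
\mathcal F_{\bar d}(\bar x)-\mathcal F_{\bar d}(x^*)\le\ip{u+A^\top\xi}{\bar x-x^*}+\ip{\xi}{\bar y-A\bar x}+L_f\|A\bar x-\bar y\|.
\]
Each term is at most a constant times $\|\bar w-w\|_H$:
\begin{itemize}
\item $\|u+A^\top\xi\|=\tau^{-1}\|x-\bar x\|$, and $\|\bar x-x^*\|$ is bounded because the sequence converges;
\item $\|\xi\|\le L_f$;
\item $\|A\bar x-\bar y\|$ is controlled as above.
\end{itemize}
Since $x^*$ minimizes $\mathcal F_{\bar d}$, the gap is nonnegative. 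Hence the gap is $O(\|\bar w^{\bar s,k}-w^{\bar s,k}\|_H)$, which gives the stated $O(k^{-1})$ and $o(k^{-1})$ rates for $k\ge K_1$.
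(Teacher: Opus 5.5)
Your proposal is correct and follows essentially the same route as the paper: you obtain $\bar d\in\mathcal D^{x^*}$ by passing to the limit in the branch condition from Lemma~\ref{lem:eventual-branch-consistency}, derive the residual bounds from the optimality conditions of \eqref{eq:padmm-explicit}, and bound the gap by convexity of $g_{\bar d}$ together with the Lipschitz step $f(A\bar x)\le f(\bar y)+L_f\|A\bar x-\bar y\|$; the only variation is that you bound the gap directly with the algorithm's own subgradients $u$ and $\xi$, rather than first proving $F_{\bar d}(\bar x^{\bar s,k})-F_{\bar d}(x^*)\le c_0\mathcal E_{\bar d}(w^{\bar s,k})$ with distance-attaining subgradients as the paper does. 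One small slip: the multiplier update gives $A\bar x-\bar y=\sigma^{-1}(\bar p-p)$ exactly, so the first identity in your feasibility bullet is wrong, but the looser bound you finally use is still valid.
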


\begin{proof}
By Proposition~\ref{prop:final-epoch-rate}, the sequences
$\{w^{\bar s,k}\}$ and $\{\bar w^{\bar s,k}\}$ converge to a
common point
$w^*=(x^*,p^*)\in\Fix S_{\bar d}$.
Hence, both $\{x^{\bar s,k}\}$ and
$\{\bar x^{\bar s,k}\}$ converge to $x^*$.
For $k\geq K_1$,
Lemma~\ref{lem:eventual-branch-consistency} gives
$d^{\bar x^{\bar s,k}}=\bar d$.
Together with $\bar x^{\bar s,k}\to x^*$, this yields
$\bar d\in\mathcal D^{x^*}$. Hence,
$\Phi^{\bar d}(\bar x^{\bar s,k})
=\Phi(\bar x^{\bar s,k})$ for $k\geq K_1$ and
$\Phi^{\bar d}(x^*)=\Phi(x^*)$.

Since $w^*=(x^*,p^*)\in\Fix S_{\bar d}$,
Proposition~\ref{prop:padmm-fixed-point} implies that
$(x^*,Ax^*,p^*)$ satisfies the KKT system
\eqref{eq:branch-kkt} for branch $\bar d$. Together with
$\bar d\in\mathcal D^{x^*}$,
Remark~\ref{eq_lifed} then implies that $x^*$ is a lifted
stationary point of problem~\eqref{model}.

For $k\geq 1$, the first-order optimality conditions for the
proximal subproblems in \eqref{eq:padmm-explicit}, together with
the definitions of $\xi^{\bar s,k}$ and $\bar p^{\bar s,k}$,
yield
$\tau^{-1}(x^{\bar s,k}-\bar x^{\bar s,k})
-A^\top\xi^{\bar s,k}
\in\partial g_{\bar d}(\bar x^{\bar s,k})$,
$\xi^{\bar s,k}\in\partial f(\bar y^{\bar s,k})$,
$\bar p^{\bar s,k}-\xi^{\bar s,k}
=\sigma A(\bar x^{\bar s,k}-x^{\bar s,k})$, and
$\bar p^{\bar s,k}-p^{\bar s,k}
=\sigma(A\bar x^{\bar s,k}-\bar y^{\bar s,k})$.
Hence, we have
\[
\begin{aligned}
  \mathcal E_{\bar d}(w^{\bar s,k})
  \leq{}&
  \left[\frac{1}{\tau}
  +\sigma(\norm{A}^2+\norm{A})\right]
  \norm{\bar x^{\bar s,k}-x^{\bar s,k}}\\
  &+\frac{1}{\sigma}
  \norm{\bar p^{\bar s,k}-p^{\bar s,k}}.
\end{aligned}
\]
Moreover, by the definition of $H$, it holds that
\[
\begin{aligned}
  \norm{\bar w^{\bar s,k}-w^{\bar s,k}}_H^2
  =
  \frac{1}{\tau}
  \norm{\bar x^{\bar s,k}-x^{\bar s,k}}^2+\frac{1}{\sigma}
  \norm{\bar p^{\bar s,k}-p^{\bar s,k}}^2,
\end{aligned}
\]
which implies
\[
\begin{aligned}
  \norm{\bar x^{\bar s,k}-x^{\bar s,k}}
  \leq
  \sqrt{\tau}
  \norm{\bar w^{\bar s,k}-w^{\bar s,k}}_H,\quad
  \norm{\bar p^{\bar s,k}-p^{\bar s,k}}
  \leq
  \sqrt{\sigma}
  \norm{\bar w^{\bar s,k}-w^{\bar s,k}}_H.
\end{aligned}
\]
Therefore,
\begin{equation}\label{eq:kkt-residual-bound}
  \mathcal E_{\bar d}(w^{\bar s,k})
  \leq
  \left[
    \frac{1}{\sqrt{\tau}}
    +\sigma\sqrt{\tau}
    \bigl(\norm{A}^2+\norm{A}\bigr)
    +\frac{1}{\sqrt{\sigma}}
  \right]
  \norm{\bar w^{\bar s,k}-w^{\bar s,k}}_H.
\end{equation}
Combining Proposition~\ref{prop:final-epoch-rate} with
\eqref{eq:kkt-residual-bound} yields the first rate in
\eqref{eq:kkt-objective-rates}.

Set
$r_1^k:=\dist(0,\partial g_{\bar d}(\bar x^{\bar s,k})
+A^\top\bar p^{\bar s,k})$,
$r_2^k:=\dist(\bar p^{\bar s,k},
\partial f(\bar y^{\bar s,k}))$, and
$r_3^k:=\norm{A\bar x^{\bar s,k}-\bar y^{\bar s,k}}$.
Then
$\mathcal E_{\bar d}(w^{\bar s,k})=r_1^k+r_2^k+r_3^k$.
Choose $\mu^k\in\partial g_{\bar d}(\bar x^{\bar s,k})$ and
$\zeta^k\in\partial f(\bar y^{\bar s,k})$ such that
$\|\mu^k+A^\top\bar p^{\bar s,k}\|=r_1^k$ and
$\|\bar p^{\bar s,k}-\zeta^k\|=r_2^k$. Since both $\bar x^{\bar s,k}$ and $x^*$ belong to $X$, we have
\[
\begin{aligned}
0\leq{}&
F_{\bar d}(\bar x^{\bar s,k})-F_{\bar d}(x^*)\\
={}&
g_{\bar d}(\bar x^{\bar s,k})-g_{\bar d}(x^*)
+f(A\bar x^{\bar s,k})-f(Ax^*)\\
\leq{}&
\langle\mu^k,\bar x^{\bar s,k}-x^*\rangle
+f(\bar y^{\bar s,k})-f(Ax^*)
+L_f\|A\bar x^{\bar s,k}-\bar y^{\bar s,k}\|\\
\leq{}&
\langle\mu^k,\bar x^{\bar s,k}-x^*\rangle
+\langle\zeta^k,\bar y^{\bar s,k}-Ax^*\rangle
+L_f r_3^k\\
={}&
\langle\mu^k+A^\top\zeta^k,
\bar x^{\bar s,k}-x^*\rangle
+\langle\zeta^k,
\bar y^{\bar s,k}-A\bar x^{\bar s,k}\rangle
+L_f r_3^k\\
\leq{}&
\|\mu^k+A^\top\zeta^k\|
\|\bar x^{\bar s,k}-x^*\|
+\|\zeta^k\|r_3^k+L_f r_3^k\\
\leq{}&
\|\bar x^{\bar s,k}-x^*\|
\left(
\|\mu^k+A^\top\bar p^{\bar s,k}\|
+\|A\|\|\zeta^k-\bar p^{\bar s,k}\|
\right)
+(\|\zeta^k\|+L_f)r_3^k\\
\leq{}&
\|\bar x^{\bar s,k}-x^*\|
\bigl(r_1^k+\|A\|r_2^k\bigr)
+2L_f r_3^k.
\end{aligned}
\]
The nonnegativity follows from the optimality of $x^*$ for
$F_{\bar d}$ over $X$. The first upper bound follows from
$\mu^k\in\partial g_{\bar d}(\bar x^{\bar s,k})$ and the
$L_f$-Lipschitz continuity of $f$, while the next follows from
$\zeta^k\in\partial f(\bar y^{\bar s,k})$.
Since $\bar x^{\bar s,k}\to x^*$, there exists $c_0>0$ such that
$F_{\bar d}(\bar x^{\bar s,k})-F_{\bar d}(x^*)
\leq c_0\mathcal E_{\bar d}(w^{\bar s,k})$ for all
$k\geq 1$. Using
$\Phi^{\bar d}(\bar x^{\bar s,k})
=\Phi(\bar x^{\bar s,k})$ for $k\geq K_1$ and
$\Phi^{\bar d}(x^*)=\Phi(x^*)$, we obtain
\[
  0\leq
  F(\bar x^{\bar s,k})-F(x^*)
  \leq
  c_0\mathcal E_{\bar d}(w^{\bar s,k}),
  \qquad k\geq K_1.
\]
The second rate in \eqref{eq:kkt-objective-rates}
then follows from the first.
\qed
\end{proof}

\begin{corollary}\label{cor:strong-local-minimizer}
Both sequences $\{x^{\bar s,k}\}$ and
$\{\bar x^{\bar s,k}\}$ converge to a common
$\nu$-strong local minimizer $x^*$ of
problem~\eqref{model_l0}.
\end{corollary}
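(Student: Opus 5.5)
The corollary follows by combining Theorem~\ref{thm:convergence-rates} with the exact-relaxation results of \cite{bian2020}. By Theorem~\ref{thm:convergence-rates}, the sequences $\{x^{\bar s,k}\}$ and $\{\bar x^{\bar s,k}\}$ converge to a common point $x^*$, and $x^*$ is a lifted stationary point of problem~\eqref{model}. It therefore suffices to show that every lifted stationary point of \eqref{model} is a $\nu$-strong local minimizer of \eqref{model_l0}. By Definition~\ref{def:strong-local-minimizer}, this amounts to two facts: $x^*\in\mathcal G_\nu$, and $x^*$ is a local minimizer of $F_{\ell_0}$ over $X$.

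The first fact is Lemma~\ref{lmdk}, which holds once $x^*$ is known to be lifted stationary. For the second, the paper's introduction already states that, under suitable conditions, every lifted stationary point of \eqref{model} is a $\nu$-strong local minimizer of \eqref{model_l0}~\cite{bian2020}. I would invoke that result directly and verify that its hypotheses are exactly Assumptions~\ref{ass1}--\ref{ass2}: $f$ convex and $L_f$-Lipschitz, and $\nu<\lambda/(L_f\|A\|)$. I would also note that the box condition $\nu<\omega$ is what \cite{bian2020} uses.

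To keep the argument self-contained, I would also sketch the local minimality directly. Since $x^*\in\mathcal G_\nu$, we have $\Phi(x^*)=\|x^*\|_0$ and $d^{x^*}=\bar d$. Take a neighborhood $U$ of $x^*$ so small that, for every $x\in U\cap X$, we have $\supp(x)\supseteq\supp(x^*)$, $\operatorname{sign}(x_i)=\operatorname{sign}(x^*_i)$ on $\supp(x^*)$, and $|x_i|<\nu$ off $\supp(x^*)$.

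Two cases then cover $U\cap X$. If $\supp(x)=\supp(x^*)$, then $F_{\ell_0}(x)=f(Ax)+\lambda\|x^*\|_0$. Moreover, $F_{\bar d}$ and $f\circ A+\lambda\|\cdot\|_0$ differ by a constant on this face. Since $x^*$ minimizes the convex function $\mathcal F_{\bar d}$, we get $F_{\ell_0}(x)\geq F_{\ell_0}(x^*)$. If instead $\supp(x)\supsetneq\supp(x^*)$, the cardinality increases by at least $1$. Meanwhile, $|f(Ax)-f(Ax^*)|\leq L_f\|A\|\|x-x^*\|$ is small on $U$, so again $F_{\ell_0}(x)\geq F_{\ell_0}(x^*)$ after shrinking $U$.

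I expect the main obstacle to be the first case. There one has to transfer optimality of $x^*$ for the branch function to optimality on the face, that is, check that $\lambda\Phi^{\bar d}$ restricted to the face agrees with $\lambda\|x^*\|_0$ up to a constant. This needs care with coordinates where $|x_i|\geq\nu$ is not guaranteed near $x^*$. If this becomes delicate, I would fall back on citing \cite[Theorem~2.8]{bian2020}, the lifted-stationary implies $\nu$-strong local minimizer result.
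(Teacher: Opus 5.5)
Your proposal is correct and follows the paper's proof, which simply combines Theorem~\ref{thm:convergence-rates} with the result of \cite{bian2020} that lifted stationary points are $\nu$-strong local minimizers; the paper cites this as \cite[Proposition~2.7]{bian2020}, not Theorem~2.8. Your self-contained sketch also works, and the obstacle you anticipate in the first case does not arise: since $\bar d=d^{x^*}$ and signs are preserved near $x^*$, for $i\in\supp(x^*)$ one has $|x_i|/\nu-\theta_{\bar d_i}(x_i)=1$ whether or not $|x_i|\geq\nu$, so $F_{\bar d}$ coincides exactly with $F_{\ell_0}$ on the face $\supp(x)=\supp(x^*)$ near $x^*$, and minimality of $x^*$ for $F_{\bar d}$ over $X$ finishes that case.
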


\begin{proof}
By Theorem~\ref{thm:convergence-rates}, the sequences
$\{x^{\bar s,k}\}$ and $\{\bar x^{\bar s,k}\}$ converge to a common lifted stationary point $x^*$ of problem~\eqref{model}.
Then by \cite[Proposition~2.7]{bian2020}, we have that $x^*$ is a
$\nu$-strong local minimizer of problem~\eqref{model_l0}.
\qed
\end{proof}

\section{Numerical experiments}
\label{sec:numerical}

In this section, we evaluate the convergence behavior and solution
quality of the RA-pADMM algorithm. We compare the proposed method with the accelerated nested proximal-gradient method (ANPG) of Tao et al.~\cite{TaoBianQu2026}. We also include pADMM as a baseline, with updates
$\bar w^k=S_{d^k}(w^k)$ and $w^{k+1}=\bar w^k$,
where $d^k=d^{x^k}$. We implement the RA-pADMM algorithm with different choices of
$\alpha$ and $\rho$ in Phase~II. For convenience, the variants
corresponding to $(\alpha,\rho)=(2,1)$, $\alpha=2$ and
$\rho\neq1$, $\alpha>2$ and $\rho=1$, and $\alpha>2$ and
$\rho\neq1$ are referred to as Halpern pADMM (H-pADMM),
relaxed Halpern pADMM (RH-pADMM), fast KM pADMM
(FKM-pADMM), and relaxed fast KM pADMM (RFKM-pADMM),
respectively. The
values of $\rho$ and $\alpha$ in each experiment are given. For the restarted accelerated pADMM methods, the sufficient decrease
parameter is set to
\(
    \delta=\frac{\lambda-\nu L_f\|A\|}{2}.
\)

We first use randomly generated sparse recovery problems with a
general matrix $A$ to compare convergence behavior and recovery
accuracy. We then consider sparse support vector machines on
real-world data sets to evaluate classification accuracy, sparsity,
and computation time. All algorithms are implemented in Python and the experiments are conducted on a Windows PC equipped with an Intel Core i5-13500H processor and 16 GB of RAM.

\subsection{Synthetic sparse recovery}
\label{subsec:synthetic}

We first consider the following sparse recovery problem with a general
linear operator and an $\ell_1$ loss:
\begin{equation}
\label{eq:exp-lad}
    \min_{x\in[0,10]^n}
    \frac{1}{m}\|Ax-c\|_1
    +\lambda\sum_{i=1}^n
    \min\left\{\frac{|x_i|}{\nu},1\right\}.
\end{equation}

We set $m=140$ and $n=160$. The ground-truth vector
$x_{\mathrm{true}}$ has $10$ nonzero entries. Its support is selected
uniformly at random without replacement, and the corresponding nonzero
values are independently sampled from the uniform distribution on
$[2,10]$. The entries of $A$ are independently generated from the
standard normal distribution. The resulting matrix is normalized by
its spectral norm so that $\|A\|=1$. The observation vector is generated
according to
\(
    c=Ax_{\mathrm{true}}+\varepsilon,
\)
where the components of $\varepsilon$ are independent Gaussian random
variables with mean zero and standard deviation $1\times 10^{-2}$.

All methods solve the same problem with $\lambda=10$,
$\nu=1$, and $X=[0,10]^n$. For $f(y)=\|y-c\|_1/m$, we have $L_f=1/\sqrt{m}$. For the pADMM-based methods,
we set $\sigma=1.2\times10^{-3}$ and $p^0=0$.
The parameters $\eta$ and $\tau$ are chosen as
\[
    \eta=
    \frac{(1+\epsilon_0)\|A\|^2}
         {\min\{1,2-\rho\}^2},
    \qquad
    \tau=\frac{1}{\sigma\eta},
\]
where $\epsilon_0>0$ is a small positive constant. In this experiment,
we take $\epsilon_0=10^{-4}$. For the relaxed variants RH-pADMM and
RFKM-pADMM, we take $\rho=0.9$, while the fast KM variants use
$\alpha=3$. For ANPG, the initial smoothing parameter
is set to $\mu_0=50$. All methods are initialized at
$x^0=1.97\mathbf{1}$, where $\mathbf{1}$ denotes the all-ones column
vector.

All methods are run for 200 counted evaluations or updates, and the experiment is
repeated on $10$ independently generated problem instances. Since each
evaluation of $S_d$ and each proximal-gradient update require two
proximal-operator evaluations, which account for the main
computational cost, their numbers provide a fair basis for comparison.
For the pADMM-based methods, $k$ denotes the cumulative number of
evaluations of $S_d$, whereas for ANPG, it denotes the number of
proximal-gradient updates. To report at index $k$ the point obtained
after exactly $k$ such evaluations or updates, we set $z_r^0=x^0$
for all methods and, for $k\geq1$, define
\begin{equation}\label{eq:z_r^k}
z_r^k=
\begin{cases}
\bar{x}_r^{k-1}, & \text{for the pADMM-based methods},\\
x_{\mathrm{ANPG},r}^k, & \text{for ANPG}.
\end{cases}
\end{equation}

For support recovery, let $x_{\mathrm{true},r}$ denote the
ground-truth vector in the $r$th trial and define
\[
\mathcal{S}_r
:=\operatorname{supp}(x_{\mathrm{true},r}),
\qquad
\widehat{\mathcal{S}}_r^k
:=\{i: |(z_r^k)_i|\geq 10^{-3}\}.
\]
The corresponding $\mathrm{F1}$ score is
$\mathrm{F1}_r^k
:=
\frac{2|\widehat{\mathcal{S}}_r^k\cap\mathcal{S}_r|}
{|\widehat{\mathcal{S}}_r^k|+|\mathcal{S}_r|}.$
Then, $\mathrm{F1}_r^k=1$ if and only if
$\widehat{\mathcal{S}}_r^k=\mathcal{S}_r$. The average $\mathrm{F1}$ score over
the $10$ trials is defined by
\[
\mathrm{F1}_k
:=
\frac{1}{10}\sum_{r=1}^{10}\mathrm{F1}_r^k.
\]
For trial $r$, let $F_r$ denote the objective function associated
with the generated matrix and observation vector. We further define the average objective value deviation and average
relative distance by
\[
\mathrm{ObjDev}_k
:=
\frac{1}{10}\sum_{r=1}^{10}
\left|F_r(z_r^k)-F_r(x_{\mathrm{true},r})\right|
\]
and
\[
\mathrm{RelDist}_k
:=
\frac{1}{10}\sum_{r=1}^{10}
\frac{\|z_r^k-x_{\mathrm{true},r}\|_2}
{\|x_{\mathrm{true},r}\|_2},
\]
respectively.

\begin{figure}[ht]
    \centering
    \includegraphics[width=0.95\textwidth]
    {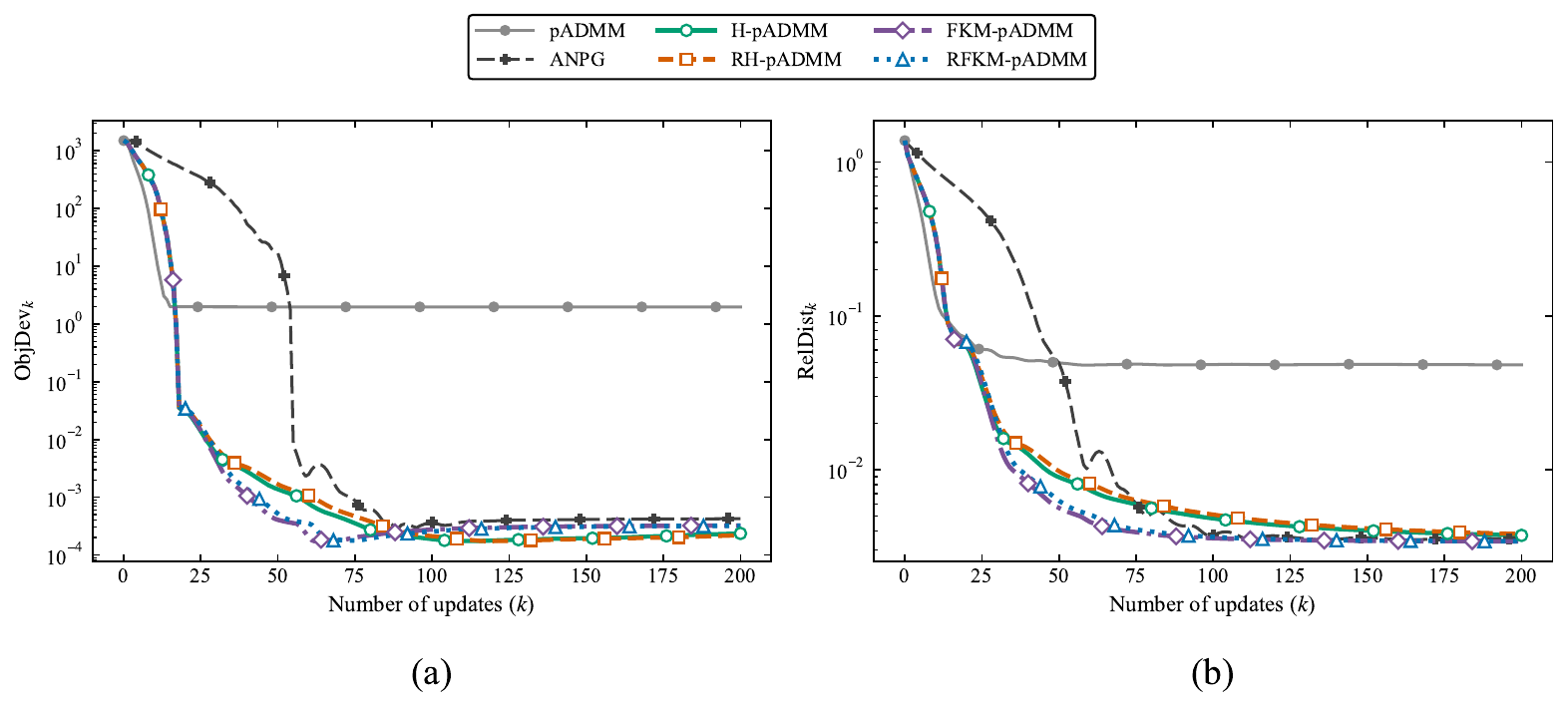}
    \caption{Average objective value deviation $\mathrm{ObjDev}_k$ and
    average relative distance $\mathrm{RelDist}_k$ for all methods}
    \label{fig:general_matrix_convergence}
\end{figure}

\begin{table}[ht]
    \centering
    \caption{Average performance of all methods after $200$ counted evaluations or updates.}
    \label{tab:general_matrix_results}
    \begin{tabular}{lccc}
        \toprule
        Method
        & $\mathrm{F1}_{200}$
        & $\mathrm{ObjDev}_{200}$
        & $\mathrm{RelDist}_{200}$ \\
        \midrule
        FKM-pADMM
        & \textbf{1.000}
        & $3.22\times10^{-4}$
        & $\mathbf{3.42\times10^{-3}}$ \\
        RFKM-pADMM
        & \textbf{1.000}
        & $3.20\times10^{-4}$
        & $3.43\times10^{-3}$ \\
        H-pADMM
        & \textbf{1.000}
        & $2.35\times10^{-4}$
        & $3.74\times10^{-3}$ \\
        RH-pADMM
        & \textbf{1.000}
        & $\mathbf{2.19\times10^{-4}}$
        & $3.82\times10^{-3}$ \\
        pADMM
        & 0.989
        & $1.97\times10^{0}$
        & $4.80\times10^{-2}$ \\
        ANPG
        & \textbf{1.000}
        & $4.24\times10^{-4}$
        & $3.56\times10^{-3}$ \\
        \bottomrule
    \end{tabular}
\end{table}

The values of $\mathrm{ObjDev}_k$ and $\mathrm{RelDist}_k$ are
plotted against the update count $k$ in
Fig.~\ref{fig:general_matrix_convergence}. The results in Fig.~\ref{fig:general_matrix_convergence}(a) indicate that
the objective value deviations of all four restarted accelerated pADMM
methods decrease more rapidly than the objective value deviation of ANPG.
FKM-pADMM and RFKM-pADMM exhibit the fastest decrease over most of the
iteration range. The four methods
also show a faster initial reduction in $\mathrm{RelDist}_k$ in
Fig.~\ref{fig:general_matrix_convergence}(b). Although ANPG
later decreases faster than H-pADMM and RH-pADMM, FKM-pADMM and
RFKM-pADMM maintain the fastest overall decrease.

Table~\ref{tab:general_matrix_results} reports
$\mathrm{F1}_{200}$, $\mathrm{ObjDev}_{200}$, and
$\mathrm{RelDist}_{200}$. Table~\ref{tab:general_matrix_results} further shows that RH-pADMM
achieves the smallest $\mathrm{ObjDev}_{200}$, whereas FKM-pADMM yields
the smallest $\mathrm{RelDist}_{200}$. The average $\mathrm{F1}$ score of pADMM is
strictly below $1$, indicating that it does not recover the exact
support in every trial. This
behavior is also reflected in
Fig.~\ref{fig:general_matrix_convergence}, where the pADMM curves in
both panels do not decrease to the levels reached by the other methods. By contrast, the four restarted accelerated
pADMM methods and ANPG attain an average $\mathrm{F1}$ score of $1$.

These results demonstrate the advantages of the restarted accelerated pADMM methods in accelerating convergence while maintaining accurate support recovery.

\subsection{Sparse classification on real data}
\label{subsec:real}

For binary classification, let $a_i\in\mathbb{R}^n$ be a feature
vector and let $y_i\in\{-1,1\}$ be its label. Define the $i$th row
of $A$ by $A_i=y_i a_i^\top$. We consider the capped-$\ell_1$
regularized support vector machine problem
\begin{equation}
\label{eq:hinge-capped}
    \min_{x\in[-20,20]^n}
    \frac{1}{m}\sum_{i=1}^m
    \bigl(1-(Ax)_i\bigr)_+
    +\lambda\sum_{j=1}^n
    \min\left\{\frac{|x_j|}{\nu},1\right\}.
\end{equation}

We use five binary-classification data sets from the UCI Machine
Learning Repository, including
Wisconsin Diagnostic Breast Cancer (WDBC), Ionosphere, Spambase,
quantitative structure--activity relationship (QSAR)
biodegradation, and Phishing Websites. Their sample sizes, original
feature dimensions, and parameter settings are reported in
Table~\ref{tab:svm_data_parameters}. For each data set, we generate ten independent stratified splits into training and test subsets in the ratio $80\%/20\%$. Feature standardization and zero-variance filtering are fitted on the training subset and then applied to the test subset.

\begin{table}[H]
    \centering
    \small
    \caption{Data-set dimensions and algorithmic parameters.}
    \label{tab:svm_data_parameters}
    \setlength{\tabcolsep}{4pt}
    \begin{tabular*}{\textwidth}
    {@{\extracolsep{\fill}}lcccccc@{}}
        \toprule
        Data set
        & Samples
        & Features
        & $\lambda$
        & $c_\sigma$
        & $\epsilon_0$
        & $\mu_0$ \\
        \midrule
        WDBC
        & 569
        & 30
        & $7.5\times10^{-4}$
        & 0.18
        & $3\times10^{-2}$
        & 0.30 \\
        Ionosphere
        & 351
        & 34
        & $1.0\times10^{-5}$
        & 0.08
        & $1\times10^{-4}$
        & 1.25 \\
        Spambase
        & 4601
        & 57
        & $3.0\times10^{-5}$
        & 0.12
        & $1\times10^{-4}$
        & 1.25 \\
        QSAR
        & 1055
        & 41
        & $1.0\times10^{-3}$
        & 0.50
        & $1\times10^{-4}$
        & 0.20 \\
        Phishing
        & 11055
        & 30
        & $7.5\times10^{-5}$
        & 0.08
        & $1\times10^{-3}$
        & 1.50 \\
        \bottomrule
    \end{tabular*}
\end{table}

The values of $\lambda$, $c_\sigma$, $\epsilon_0$, and $\mu_0$
in Table~\ref{tab:svm_data_parameters} are fixed before the
experiments and kept unchanged across the ten splits.
For each data set, all six methods use the same $\lambda$.
For the pADMM-based methods, we set
\[
    \sigma=\frac{c_\sigma}{m},
    \qquad
    \eta=
    \frac{(1+\epsilon_0)\|A\|^2}
         {\min\{1,2-\rho\}^2},
    \qquad
    \tau=\frac{1}{\sigma\eta},
\]
where $m$ and $A$ correspond to the preprocessed training
subset in each split. We use $\rho=1.1$ for RH-pADMM and
RFKM-pADMM, and $\alpha=2.2$ for FKM-pADMM
and RFKM-pADMM. For ANPG, $\mu_0$ is the initial smoothing
parameter.

For the pADMM-based methods, we set $p^0=0$. To facilitate progressive support identification, we adopt a strategy similar to that in \cite{TaoBianQu2026}, where all methods start from $x^0=0$ and the cap parameter is gradually decreased according to a common continuation scheme. Specifically, for each training subset, we set $\nu_{\mathrm{final}}=0.99\lambda/(L_f\|A\|)$, where $L_f=1/\sqrt{m}$, which ensures that $\lambda/\nu_{\mathrm{final}}>L_f\|A\|$. The initial cap parameter is chosen as $\nu_0=\min\{18,\max\{10\nu_{\mathrm{final}},5\lambda/\|A^\top\xi^0\|_\infty\}\}$, where $\xi^0=\Pi_{[-1/m,0]^m}(-\sigma\mathbf{1})$. All methods then use the common continuation rule
\[
    \nu_k
    =
    \max\left\{
        \nu_{\mathrm{final}},
        \nu_0(0.75)^k
    \right\},
    \qquad k=1,2,\ldots.
\]
The index $k$ is defined as in the preceding subsection. For the classification experiment, the restart threshold is
$\delta=(\lambda-\nu_{\mathrm{final}}L_f\|A\|)/2$.

Using the notation $z_r^k$ in \eqref{eq:z_r^k}, where $r$ denotes the $r$th data split, we define
\[
    \Delta_{r,k} :=
    \max\left\{
        \frac{\|z_r^k-z_r^{k-1}\|}
             {1+\|z_r^{k-1}\|},
        \frac{
            \left|F_{\nu_{\mathrm{final}}}(z_r^k)
            -F_{\nu_{\mathrm{final}}}(z_r^{k-1})\right|
        }{
            1+\left|F_{\nu_{\mathrm{final}}}(z_r^{k-1})\right|
        }
    \right\},
\]
where $F_{\nu_{\mathrm{final}}}$ denotes the objective function in
\eqref{eq:hinge-capped} with $\nu=\nu_{\mathrm{final}}$. The algorithm
terminates when $\Delta_{r,k}\leq 2\times10^{-3}$ for five consecutive
updates. The maximum number of updates is set to $3000$ for all
methods. For each method, let $k_{\mathrm{stop},r}$ denote the value of $k$
at termination on the $r$th data split. The average number of
nonzero coefficients at termination is defined by
\[
  \mathrm{nnz}
  :=
  \frac{1}{10}
  \sum_{r=1}^{10}
  \left|
  \left\{
  i:
  \left|
  \left(z_r^{k_{\mathrm{stop},r}}\right)_i
  \right|>10^{-8}
  \right\}
  \right|.
\]
\begin{table}[htbp]
    \centering
    \small
    \caption{Average performance over ten data splits.}
    \label{tab:svm_results}
    \setlength{\tabcolsep}{8pt}
    \renewcommand{\arraystretch}{0.92}
    \begin{tabular*}{\textwidth}
    {@{\extracolsep{\fill}}llccc@{}}
        \toprule
        Data set & Method & Test accuracy & Time (s) & $\mathrm{nnz}$ \\
        \midrule

        \multirow{6}{*}{WDBC}
        & FKM-pADMM  & \textbf{0.9816} & $7.04\times10^{-3}$ & 17.1 \\
        & RFKM-pADMM & 0.9807 & $7.09\times10^{-3}$ & 17.1 \\
        & H-pADMM    & \textbf{0.9816} & $6.51\times10^{-3}$ & 17.1 \\
        & RH-pADMM   & 0.9807 & $6.85\times10^{-3}$ & 17.2 \\
        & pADMM      & 0.9763 & $\mathbf{5.61\times10^{-3}}$ & 17.4 \\
        & ANPG       & 0.9781 & $6.76\times10^{-3}$ & 17.5 \\
        \midrule

        \multirow{6}{*}{Ionosphere}
        & FKM-pADMM  & \textbf{0.8944} & $8.00\times10^{-3}$ & 15.5 \\
        & RFKM-pADMM & 0.8859 & $7.25\times10^{-3}$ & 14.6 \\
        & H-pADMM    & 0.8930 & $7.89\times10^{-3}$ & 15.5 \\
        & RH-pADMM   & 0.8859 & $7.43\times10^{-3}$ & 14.6 \\
        & pADMM      & 0.8873 & $8.14\times10^{-3}$ & 16.3 \\
        & ANPG       & 0.8915 & $\mathbf{6.04\times10^{-3}}$ & 14.8 \\
        \midrule

        \multirow{6}{*}{Spambase}
        & FKM-pADMM  & 0.9185 & $\mathbf{1.99\times10^{-2}}$ & 32.9 \\
        & RFKM-pADMM & \textbf{0.9191} & $2.05\times10^{-2}$ & 33.2 \\
        & H-pADMM    & 0.9186 & $2.00\times10^{-2}$ & 32.9 \\
        & RH-pADMM   & \textbf{0.9191} & $2.10\times10^{-2}$ & 33.2 \\
        & pADMM      & 0.9186 & $2.04\times10^{-2}$ & 35.2 \\
        & ANPG       & 0.9181 & $2.68\times10^{-2}$ & 32.7 \\
        \midrule

        \multirow{6}{*}{QSAR}
        & FKM-pADMM  & 0.7853 & $8.59\times10^{-3}$ & 16.5 \\
        & RFKM-pADMM & 0.7853 & $8.38\times10^{-3}$ & 17.2 \\
        & H-pADMM    & 0.7848 & $8.43\times10^{-3}$ & 16.5 \\
        & RH-pADMM   & 0.7848 & $7.84\times10^{-3}$ & 17.2 \\
        & pADMM      & \textbf{0.7929} & $\mathbf{7.77\times10^{-3}}$ & 17.5 \\
        & ANPG       & 0.7863 & $8.58\times10^{-3}$ & 16.9 \\
        \midrule

        \multirow{6}{*}{Phishing}
        & FKM-pADMM  & 0.9152 & $1.56\times10^{-2}$ & 8.0 \\
        & RFKM-pADMM & 0.9150 & $1.58\times10^{-2}$ & 8.0 \\
        & H-pADMM    & 0.9152 & $\mathbf{1.54\times10^{-2}}$ & 8.0 \\
        & RH-pADMM   & 0.9151 & $1.67\times10^{-2}$ & 8.0 \\
        & pADMM      & 0.9142 & $1.80\times10^{-2}$ & 8.0 \\
        & ANPG       & \textbf{0.9153} & $1.65\times10^{-2}$ & 8.0 \\
        \bottomrule
    \end{tabular*}
\end{table}

Table~\ref{tab:svm_results} reports the average test accuracy, computation
time, and nnz of the six methods on the
five data sets.
The results show that the proposed methods remain competitive with
pADMM and ANPG in terms of classification accuracy across the five
data sets. FKM-pADMM and H-pADMM attain the highest average test accuracy
on WDBC, while FKM-pADMM also achieves the highest accuracy on
Ionosphere. On Spambase, the highest average test accuracy is attained by
RH-pADMM and RFKM-pADMM, whereas on Phishing, FKM-pADMM and H-pADMM
achieve accuracies very close to that of ANPG. The main exception is
QSAR, where pADMM attains the highest average test accuracy.

In terms of computation time, FKM-pADMM and H-pADMM are faster than
both pADMM and ANPG on Spambase and are also faster than ANPG on
Phishing while maintaining comparable accuracy. By contrast, pADMM is
the fastest method on WDBC and QSAR, whereas ANPG has the shortest
computation time on Ionosphere. Overall, FKM-pADMM and H-pADMM provide
competitive classification accuracy together with favorable
computational performance on several data sets. Moreover, the
$\mathrm{nnz}$ values obtained by the six methods are similar on each
data set, so their test accuracies are compared at approximately
matched sparsity levels.

To summarize the classification accuracy across data sets, let
$a_{D,j}$ denote the average test accuracy of method $j$ on data set
$D$. We define the cumulative relative accuracy of method $j$ by
\[
    S_j
    :=
    \sum_D
    \frac{a_{D,j}}
         {\max_{\ell}a_{D,\ell}}.
\]
Thus, the best-performing method on each data set has a relative
accuracy of $1$, and the maximum possible value of $S_j$ is $5$.

\begin{figure}[htbp]
    \centering
    \includegraphics[width=0.98\textwidth]
    {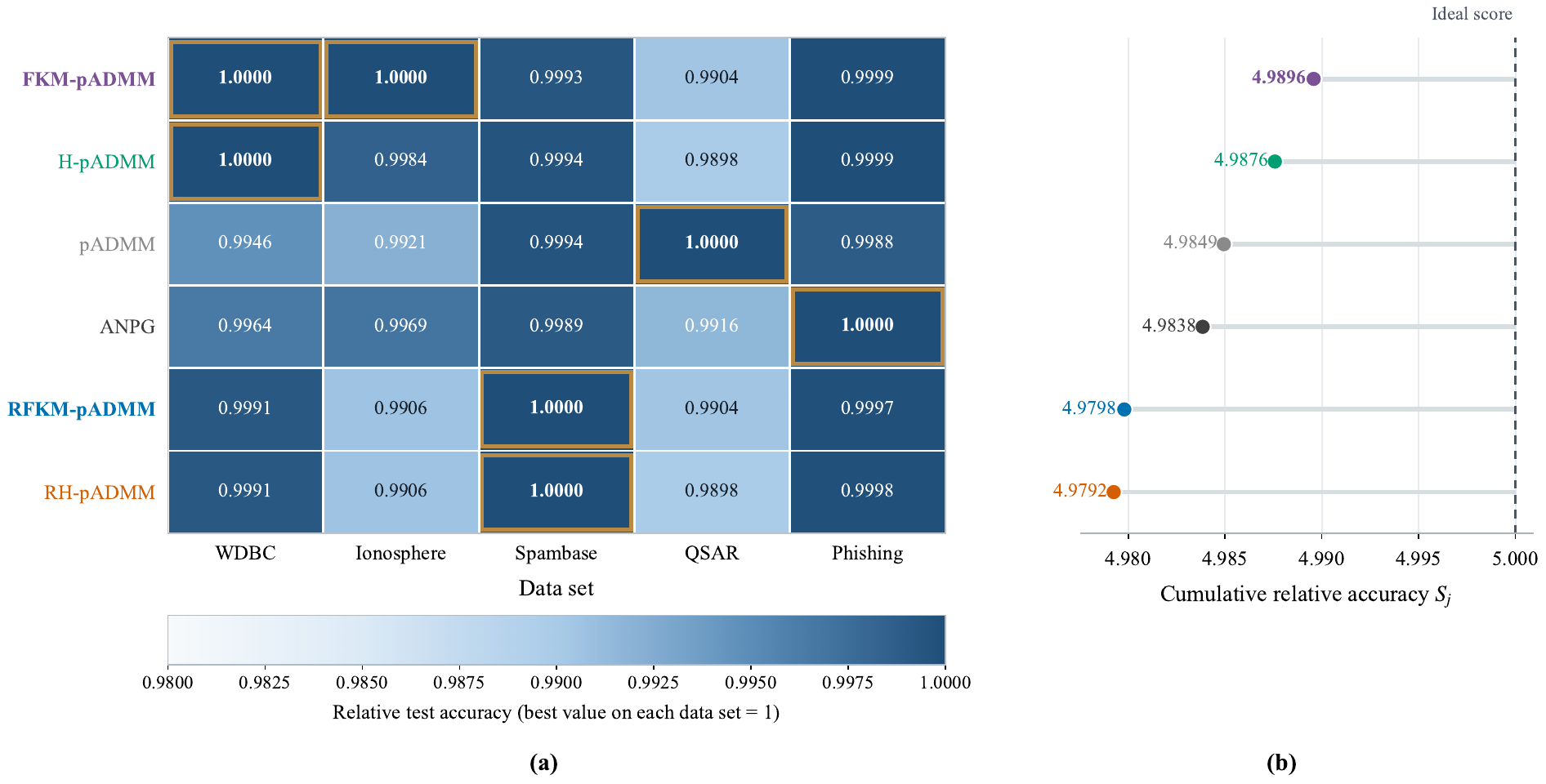}
    \caption{Relative test accuracy on each data set and cumulative
    relative accuracy of the six methods}
    \label{fig:relative_accuracy_comparison}
\end{figure}

The relative test accuracies of the six methods on the five data sets
are compared in Fig.~\ref{fig:relative_accuracy_comparison}(a), while
their cumulative relative accuracy scores are reported in
Fig.~\ref{fig:relative_accuracy_comparison}(b). FKM-pADMM and H-pADMM
achieve slightly higher cumulative relative accuracy than pADMM and
ANPG. In particular, FKM-pADMM attains the highest cumulative relative
accuracy, followed by H-pADMM. These results support the effectiveness
of FKM-pADMM and H-pADMM for sparse classification problems.

\section{Conclusion}\label{sec:conclusion}

In this paper, we developed the RA-pADMM algorithm for solving
problem~\eqref{model} based on the nonexpansive fixed-point
representations of its convex branch problems. We established
finite termination of the preliminary branch identification phase
and finiteness of the branch updates accepted during the
subsequent phase. The primal iterates and proximal outputs
converge to a common lifted stationary point of
problem~\eqref{model}, which is also a $\nu$-strong local
minimizer of problem~\eqref{model_l0}. Moreover, the Halpern and
fast Krasnosel'ski\u{\i}--Mann variants achieve nonergodic
$O(k^{-1})$ and asymptotic $o(k^{-1})$ convergence rates,
respectively, for both the objective value gap and the KKT
residual associated with the final branch. Numerical experiments
on sparse recovery and real data classification problems confirm
the effectiveness of the RA-pADMM algorithm.

\begin{acknowledgements}
The third author was partially supported by the National Natural Science Foundation of China grant (12425115).
\end{acknowledgements}

\par\addvspace{11pt}
{\small
\noindent\textbf{Data Availability}
The datasets generated during the current study are available from the corresponding author on reasonable request.
\par
}

\section*{Declarations}

{\small
\noindent\textbf{Conflict of interest}
The authors declare that they have no conflict of interest.
\par
}

\bibliographystyle{spmpsci}
\bibliography{references}

\end{document}